\def\cf{\emph{cf.}} 
\newcommand{\ie}{\textit{i.e.}}
\newcommand*\mycirc[1]{%
  \begin{tikzpicture}
    \node[draw,circle,inner sep=1pt] {#1};
  \end{tikzpicture}}
\definecolor{color0}{rgb}{0.95,0.95,0.95} 
\definecolor{cb-pink}{HTML}{cc78bc}
\definecolor{cb-darkblue}{HTML}{0173b2}
\definecolor{cb-vividorange}{HTML}{de8f05}
\definecolor{cb-green}{HTML}{029e73}
\definecolor{cb-strongorange}{HTML}{d55e00}
\definecolor{darkgray}{HTML}{404040}
\numberwithin{equation}{section}
\newcommand{\R}{\mathbb{R}}
\newcommand{\N}{\mathbb{N}}
\newcommand{\PP}{\mathbb{P}}
\newcommand{\parametrization}{\Pi}
\newcommand{\eps}{\epsilon}
\newcommand{\opd}{\,\mathrm{d}}
\newcommand{\discGrad}[2]{\partial_{(#1,#2)}}
\newcommand{\discHess}[2]{\partial^2_{(#1,#2)}}
\newcommand{\energy}{\mathcal E}
\newcommand{\err}{\mathrm{err}}
\newcommand{\distortion}{{\gamma}}
\newcommand{\Distortion}{{\Gamma}}
\newcommand{\densitymm}{\rho^\eps_{M\times M}}
\newcommand{\NNSpace}{\mathcal F^\eps}
\newcommand{\phisub}{\phi_V}
\newcommand{\embedman}{M_0}
\DeclarePairedDelimiter\abs{\lvert}{\rvert}
\DeclarePairedDelimiter\norm{\lVert}{\rVert}
\DeclarePairedDelimiter\absav{\lvert}{\rvert_{\text{av}}}
\DeclareMathOperator{\av}{av}
\DeclareMathOperator{\Grad}{grad}
\DeclareMathOperator{\Hess}{Hess}
\DeclareMathOperator{\tr}{tr}
\DeclareMathOperator{\PT}{PT}
\DeclareMathOperator{\cone}{\mathcal{C}}
\DeclareMathOperator{\discprod}{\mathcal{D}}
\DeclareMathOperator{\lrelu}{LeakyReLU}
\ProvideDocumentCommand\fracp{O{}m}{\frac{\partial #1}{\partial #2}}
\ProvideDocumentCommand\fracpsq{O{}mm}{\frac{\partial^2 #1}{\partial #2 \partial #3}}
\ProvideDocumentCommand\sample{O{\epsilon}}{\mathcal S_{#1}}
\newcommand{\labeltext}[2]{%
  \@bsphack
  \csname phantomsection\endcsname 
  \def\@currentlabel{#1}{\label{#2}}%
  \@esphack
}
\newtheorem{remark}{Remark}
\newtheorem{lemma}{Lemma}
\newtheorem{proposition}{Proposition}
\newtheorem{theorem}{Theorem}
\newtheorem{corollary}{Corollary}
\begin{document}
\title{Convergent autoencoder approximation of
  low bending and low distortion manifold embeddings} 
\author{Juliane Braunsmann\thanks{University of M\"unster, M\"unster , Germany 
  (\url{j.braunsmann@uni-muenster.de}, 
  \url{benedikt.wirth@uni-muenster.de}).}
\and Marko Rajkovi\'c\thanks{Institute for Numerical Simulation, University of Bonn, Bonn, Germany 
  (\url{marko.rajkovic@ins.uni-bonn.de}, 
  \url{martin.rumpf@ins.uni-bonn.de}).}
\and Martin Rumpf\footnotemark[3]
\and Benedikt Wirth\footnotemark[2] }

\maketitle


\begin{abstract} 
Autoencoders are widely used in machine learning for 
dimension reduction of high-dimensional data.
The encoder embeds the input data manifold into a lower-dimensional latent space, while the decoder represents the inverse map,
providing a parametrization of the data manifold by the manifold in latent space.
We propose and analyze a novel regularization for learning the encoder component of an autoencoder:
a loss functional that prefers isometric, extrinsically flat embeddings and allows to train the encoder on its own.
To perform the training, it is assumed that the local Riemannian distance and the local Riemannian average can be evaluated for pairs of nearby points on the input manifold.
The loss functional is computed  via  Monte Carlo integration. Our main theorem identifies a geometric loss functional 
of the embedding map as the $\Gamma$-limit of the sampling-dependent loss functionals.
Numerical tests, using image data that encodes different explicitly given data manifolds, show that smooth manifold embeddings into latent space are obtained.
Furthermore, due to the promotion of extrinsic flatness, interpolation between not too distant points on the manifold
is well approximated by linear interpolation in latent space.
 \end{abstract}
\section{Introduction}\label{sec:intro}
A central machine learning task is to identify high-dimensional objects from given data sets with points in a 
(suitably generated) latent manifold. Such a representation is frequently defined via an \emph{encoder map} 
of an \emph{autoencoder} acting on input objects and mapping them into a low-dimensional Euclidean \emph{latent space}. The 
\emph{latent manifold}
is then the image of this encoder map. 
The associated \emph{decoder} maps back points in latent space to points in the input space. 
The assumption that an observed high-dimensional data set actually forms a low-dimensional manifold -- the image of the latent manifold under the decoder map (which is often called \emph{hidden manifold} due to its a priori unknown topology and geometry) -- is called the \emph{manifold hypothesis}. 
Encoder and decoder can be trained as {deep} neural networks via the minimization of a functional called \emph{reconstruction loss} which compares the input data with its image under the composition of the encoder and decoder mapping.
This approach is an instance of \emph{deep manifold learning}.
One aims for a latent manifold whose geometry is close to the hidden manifold, however,
measuring only a reconstruction loss is not enough to accurately recover the geometry present in the data.
Different strategies to favor smoothness of the encoder and decoder mapping and 
thus regularity of the latent manifold have been investigated.
Among them are methods which promote sparsity~\cite{RaPoChLe07}, contractive autoencoders~\cite{RiViMuGlBe11}  
which use loss functionals penalizing the norm of the Jacobian of the encoder mapping, denoising autoencoders~\cite{ViLaBeMa08}, or variational autoencoders~\cite{KiWe13} which regularize the latent representation to match a tractable probability distribution.

A major deficit of autoencoders is that they frequently fail to reproduce the
{statistical distribution of input data}
 in the latent space. 
To resolve this issue, approaches promoting \emph{low distortion} or \emph{isometric} encoder maps are proposed.
In \cite{SchTa19}, a loss functional measures the difference between distances in the pushforward metric and distances in latent space in order to quantify the lack of isometry.
The loss functional from  \cite{PaTaBrKi19} 
compares Euclidean distances in latent space with geodesic distances on the input manifold.
In \cite{KaZhSaNa20}, {isometric}, \ie~ length-preserving, encoder maps are used to more accurately pushforward distributions from input to latent space. A loss function based on Shannon-Rate-Distortion is used for this purpose.
The loss functional from \cite{AtGrLi20} promotes isometry of the decoder map (by penalizing deviation from a non-orthogonal Jacobian matrix), and that the 
encoder is a pseudo-inverse of the decoder (by enforcing the Jacobians of decoder and encoder to be transposes of each other).
In \cite{PeLiDi+20}, isometric embeddings in latent space are learned to obtain standardized data coordinates from scientific measurements. To this end, the Jacobian is approximated via normally distributed sampling around each data point (so-called \emph{bursts}), and the deviation of the local covariance of bursts from the identity is used to measure the lack of orthogonality of the Jacobian.
In \cite{YoYoSoPa21}, these approaches are extended by studying autoencoders which approximately preserve  not only distances, but also angles and areas. 
This is achieved by a loss functional depending on eigenvalues of the pullback metric defined in terms of Jacobians of encoder and decoder.
In \cite{KoClMi21}, the spectral method of Laplacian Eigenmaps~\cite{BeNi01} was combined with a loss which 
penalizes, in terms of Lipschitz constants, the deviation of the embedding map and its inverse from the identity.

Data manifolds often come with a metric 
encoding the cost of local variations on the manifold. 
For sufficiently regular data manifolds, it is shown in \cite{ShKuFl18} how to transfer this metric to the latent manifold,
thereby turning it into a \emph{Riemannian} manifold. 
This in principle allows to compute shortest paths, 
exponential maps, and parallel transport in latent space, which the decoder can pushforward
to the data manifold. Latent spaces of variational autoencoders were first studied as a Riemannian manifold in \cite{ChKlKu+18}.
A desired property of autoencoders is to enable meaningful interpolation between data points via an affine interpolation in the latent space. 
In \cite{BeRaRoGo18} an \emph{adversarial regularizer} is proposed to ensure visually realistic interpolations in latent space.
To this end, the adversarial regularizer tries to make the decoding of interpolations in latent space indistinguishable from real data points.

In this paper we study embeddings from general input manifolds of high-dimensional data into low-dimensi\-onal latent spaces.
We propose a loss functional based on randomly sampled point pairs on the input manifold.
We assume that the Riemannian distance on the input manifold and the Riemannian average
can be computed for each such pair of points. The loss functional consists of two components. The first component measures the distortion of the distance, thus promoting approximately isometric latent space embeddings. The second component penalizes large bending, thus promoting flatness. 
Altogether, we obtain a low bending and low distortion (LBD) discrete loss functional. 
It depends on a parameter $\epsilon$, defined as the maximal distance of each point pair sampled
 on the input manifold.
As the number of samples increases, we obtain a 
Monte Carlo limit of the initially discrete loss functional as a nonlocal loss functional, a double integral over the pairs of input manifold points which are at most $\epsilon$ apart.
The main theorem of this paper identifies the $\Gamma$-limit of these nonlocal Monte Carlo limits, for sampling distance 
$\epsilon\to 0$, as a local
geometric loss functional of embeddings.
This loss functional consists of a distortion and a bending energy that are proper limits of their discrete counterparts.

From an abstract numerical analysis point of view,
the above limit problem belongs to the class of problems in which an infinite-dimensional optimization problem is approximated in two steps: 
First, the model or objective functional is approximated introducing an auxiliary length scale parameter $\eps$.
Then the new objective functional is restricted to a space or set of discretized functions, parametrized by finitely many parameters that can be optimized for.
To relate the final discrete problem to the original one, 
both the limit $\eps\to0$ and the limit of increasing expressivity of the discrete function space have to be tackled. At the same time, the parameter $\eps$ and the chosen discrete function space have to be compatible  with each other.
For instance, in the well-known example 
of finite element discretizations of phase field approximations to sharp interface problems, the grid size needs to decrease faster than the phase field parameter $\eps$ in order to avoid discretization artifacts (\cf~\cite{FePr01}).
In our setting, the discrete function space consists of {realizations of} deep neural networks,
and, for our type of loss functional, it becomes necessary that these functions are sufficiently regular with respect to $\eps$,
\ie, that $\eps$ decreases faster than some regularity measure of the discretized functions.
We achieve this by imposing easily implementable bounds on the network weights which become weaker for decreasing $\eps$.

In the presented numerical tests we use
digital image data that represents different explicitly given hidden data manifolds.
Thus, the dimension of our input space equals the number of pixels times the number of colors. 
\begin{figure}
 \centering
 \input{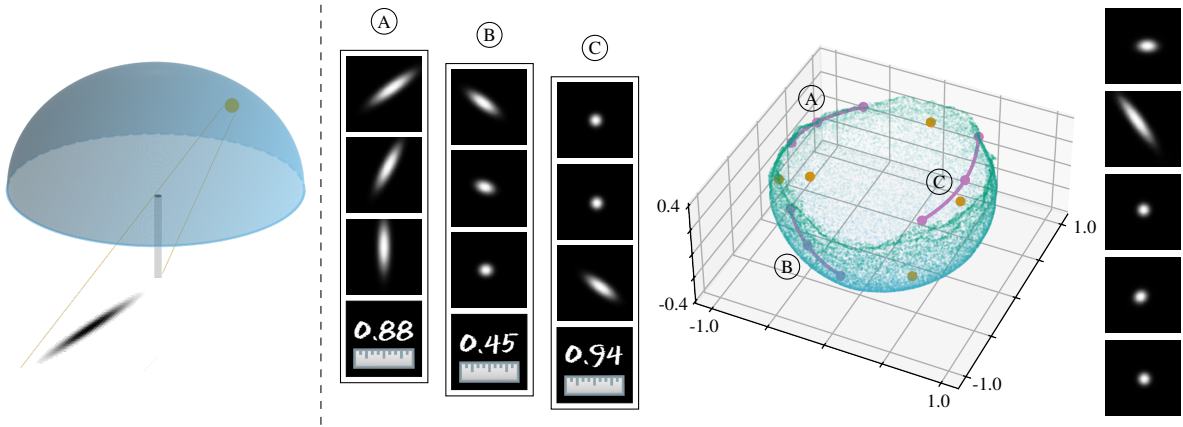}
 \caption{Results for dataset \ref{dataset:s} (\cf~\cref{subsec:setup}) for $\lambda=0$ and $\eps=\pi/8$ (with maximal distance being $\pi$). From left to right:
   a sketch of the sundial configuration; training data (image pairs with their geodesic average and distance);
   the latent manifold $\phi(M)\subset \R^{16}$ projected into $\R^3$ via PCA;
  decoder outputs for the orange points in latent space. 
  The encoder was trained separately from the decoder and the training was stopped after the value of the functional $E^{\sample}[\phi]$ evaluated on a test set did not decrease for 20 epochs. Pairs with distance below $\frac{1}{100}$ of the maximal distance were rejected. The decoder was trained until an accuracy of $10^{-5}$ was reached on the same test set. The percentage of explained variance for the first three components lies at 97.8\%, with a threshold of 99\% reached at 6 components.}
  \label{fig:masterpiece}
 \end{figure}
\Cref{fig:masterpiece} shows an example with input images representing shadows of a sundial.
The learned embedded manifold is illustrated by randomly sampling the input data
and visualizing the corresponding output of the trained encoder network.
For this visualization, the full latent space was projected onto the three dominant PCA directions of the obtained point cloud in latent space.
The input manifold consists of shadow images for all possible positions of a lightsource on a hemisphere. The training data consists of tuples of two such images, their distance and their mean, where the Riemannian distance and mean are induced by the geometry of the hemisphere.
The figure demonstrates that our
LBD manifold embedding clearly reflects the geometric characteristics of the original hidden manifold, the hemisphere.
In contrast to other isometry promoting approaches  the \emph{Jacobian of the encoder is not approximated}
(via backpropagation or complete correlation of all nearest neighbors), but a simple 
\emph{Monte\,Carlo  sampling of point pairs} is used.
Our numerical experiments confirm that the bending loss 
significantly increases \emph{smoothness} of the resulting latent space manifold when compared to a pure isometry loss. Indeed, 
solely using an isometry loss functional allows quite irregular Nash--Kuiper embeddings. 
Furthermore, the decoder maps 
linear interpolations in latent space to reasonable interpolations on the data manifold.

The underlying loss functional has been proposed already in the conference paper \cite{BrRaRu21},
which contains a short derivation of our embedding ansatz and motivates the loss functional. 
The only significant theoretical result was a consistency result using Taylor expansion arguments,  
under the implicit assumption that the manifold embedding is already known to be sufficiently smooth. 
This paper significantly extends the results in the following directions: 

We show existence of minimizers for the nonlocal loss functional in a specific
class of functions which consists of realizations of smooth neural networks satisfying (readily enforceable) bounds in certain norms.

We discuss the dependence of the limit functional on the used sampling strategy. 

We prove the variational convergence (more specifically, Mosco-convergence as a variant of  $\Gamma$-convergence)
of the nonlocal autoencoder loss functionals to a continuous second order deformation functional.
As a consequence, minimizers of the autoencoder loss functionals (which are  {realizations of} 
 neural networks satisfying the above-mentioned norm bounds) converge to a minimizer of the limit functional.
This moreover automatically implies the existence of minimizers of the latter.

We present novel and systematic numerical experiments. A substantial part of the result section is devoted to a thorough quantitative analysis of the experiments presented in our conference paper 
and a detailed study of an additional example with the Klein bottle as the underlying hidden manifold geometry.

The paper is structured as follows.
\Cref{sec:discreteloss} introduces the regularization loss as the Monte Carlo limit for dense sampling of input data.
In \cref{sec:analysis} we derive the $\Gamma$-convergence result for the regularization loss.
We start with a recapitulation of some facts about the function spaces necessary in our study in \cref{subsec:function_spaces}. In \cref{subsec:discrete_existence},we prove the existence of a minimizer for the {nonlocal} regularization functional under suitable assumptions on the class of embedding networks. FInally, in \cref{subsec:limit}  we demonstrate the Mosco-convergence of the discrete functionals to the continuous limit functional.
In \cref{sec:experiments} we present some numerical experiments.
As a proof of concept, we train autoencoders on several image datasets representing a priori known data manifolds. We describe the datasets in \cref{subsec:setup} and the autoencoder setup in \cref{subsec:autoencoder}. Finally, we discuss the numerical results in \cref{subsec:visualization,subsec:interpolation,subsec:noise}.


\section{A low bending and low distortion regularization for encoders}
\label{sec:discreteloss}
We consider a compact, $m$-dimensional Riemannian manifold $\overline M$ with metric $g$, with or without boundary, embedded in some very high-dimensional space $\R^n$.  In our applications $n=N$ for grayscale or $n=3N$ for RGB images
with $N$ pixels. 
Our aim is to compute an embedding $\phi$ of $\overline M$ into $\R^l$,
where we think of the dimension $l$ as being only moderately larger than $m$
(e.g. $l=2m$ so that the existence of a smooth embedding is guaranteed by Whitney's embedding theorem~\cite{Wh92}). We call $\R^l$ the \emph{latent space} and $\phi(\overline M)\subset \R^l$ the \emph{latent manifold}.
Formulated in terms of autoencoders this amounts to learning 
a pair of maps 
\begin{equation*} 
\phi=\phi_\theta:\overline M\to\R^l,\text{ }  \psi_\xi:\R^l\to\R^n \text{ with }  \psi_\xi(\phi_\theta(x))\approx x \text{ for all } x\in\overline M.
\end{equation*}
The \emph{encoder} $\phi_\theta$ and \emph{decoder} $\psi_\xi$ are implemented as deep neural networks with 
vectors of encoder and decoder parameters $\theta$ and $\xi$. The encoder $\phi_\theta$ is actually defined on the embedding space $\R^n$, but we don't distinguish between $\phi_\theta$ and $\phi_\theta\vert_{\overline{M}}$, since we are not interested in $\phi_\theta$ outside of $\overline{M}$.
The parameters are typically optimized via the minimization of a loss function measuring the difference between the original points $x$ and their reconstructions $\psi_\xi(\phi_\theta(x))$.
An appropriate structure and regularity of the embedding into latent space can be promoted by a suitable, geometrically inspired regularization loss for the encoder. This is of particular interest for downstream tasks such as classification~\cite{AtGrLi20}, Riemannian interpolation and extrapolation~\cite{BeRaRoGo18}, clustering or anomaly detection~\cite{ZhLiCh+21}.
Indeed, if $\overline M$ were isometric to $\R^m$ and thus could be embedded isometrically into an $m$-dimensional affine subspace of $\R^l$,
then, for instance, the most important and basic operations of computing distances and interpolations would become trivial in latent space. 
The same holds for many other downstream tasks.
Although such an embedding is usually prevented by the intrinsic or the global geometry of $\overline M$,
the hope for simplified downstream tasks motivates the search for an embedding as close as possible to isometric and flat, at least locally. 
To this end, we suggest the following two simple objectives for any two not too distant points $x,y\in\overline M$:
\smallskip

\noindent [{\bf I}sometric] The intrinsic Riemannian distance between $x$ and $y$ in $\overline M$ 
should differ as little as possible from the Euclidean distance between the 
latent codes $\phi(x)$\,and\,$\phi(y)$.\smallskip

\noindent [{\bf F}lat]
The encoder image of a (weighted) average between $x$ and $y$ in $\overline M$ 
should deviate as little as possible from the (weighted) Euclidean average between $\phi(x)$ and $\phi(y)$.
\smallskip

Finding an \emph{isometric embedding},
as extensively pursued in the literature (\cf~\cref{sec:intro}),
essentially is equivalent to ensuring the first objective (\textbf{I}) for infinitesimally close points $x,y\in\overline M$.
From the mathematical point of view, though, isometry by itself is insufficient to ensure regular embeddings
since the family of isometric embeddings is very large and contains quite irregular elements. In particular,
Nash--Kuiper embeddings are in general only H\"older differentiable \cite{Ku55a,Ku55b,Na54}.
Therefore, we suggest (\textbf{I}) and (\textbf{F}) as stronger objectives:

$\bullet$ The isometry {or low distortion} objective (\textbf{I}) asks that the \emph{intrinsic} distances between $x$ and $y$ in $\overline M$ are approximated by the \emph{extrinsic} distances between $\phi(x),\phi(y)\in\R^l$ in latent space (rather than the intrinsic distances in the latent manifold $\phi(\overline M)$ with the metric induced by $\R^l$, which would define an isometric embedding).

$\bullet$ The flatness or {low} bending objective (\textbf{F}) enforces some second order low bending regularity or flatness on $\phi$ by requiring that the \emph{geodesic interpolation} between $x$ and $y$ in $\overline M$ is well approximated by extrinsic \emph{linear interpolation} in the latent space $\R^l$.

\subsection{A low bending and low distortion loss functional}
In what follows we give more details on the manifold $\overline M$ and the LBD loss functional.

Let $(M_0,g)$ be a smooth $m$-dimensional Riemannian manifold without boundary. Let $M$ be an open subset of $M_0$ so that $(M, g)$ is a smooth $m$-dimensional Riemannian submanifold of $\embedman$  and let $\overline M$ be its closure. 
Further, let $\overline M$ be compact and such that $\overline M$ is a (not necessarily smooth) $m$-dimensional manifold, potentially with boundary, in the following sense:
For each $x\in\overline M$ there exist a (relatively) open neighborhood $U$ of $x$ in $\overline M$ and a homeomorphism $\phi\colon U\to V$ with $V=B_1(0)\coloneqq\{y\in\R^m:|y|<1\}$ for $x\in M$ and $V=B_1(0)\cap H^m$ for $x\in\overline M\setminus M$, where $$H^m\coloneqq\{(y_1, \dots, y_m)\in \R^m \,:\, y_m \geq 0\}$$ is the closed $m$-dimensional upper half-space.

Let us denote by $d_M(x,y)$ the Riemannian distance between any two points $x,y\in M$.
Further, let $TM$ denote the tangent bundle of $M$, $T_xM$ the tangent space to $M$ at $x\in M$ and $\exp_x$ the Riemannian exponential map defined on (a subset of) $T_xM$.
As mentioned before, for training of our encoder we will employ the Riemannian mean of point pairs $(x,y)\in M\times M$. By Riemannian mean we mean the midpoint of the unique geodesic connecting $x$ and $y$ in $M$, if it exists.
If $y=\exp_xv$, where $v\in T_xM$ is the initial velocity of this unique shortest geodesic connecting $x$ with $y$,
the Riemannian mean is given by $\av_M(x,y)=\exp_x\frac v2$.
If no such $v\in T_xM$ exists (for instance due to nonunique shortest geodesics), the Riemannian mean is not defined.
(Note that one could slightly generalize the definition of the Riemannian mean as the midpoint of shortest connecting curves in $\overline M$,
which may include curves touching the boundary of $M$. However, this way there might exist several $y\in M$ with same $\av_M(x,y)$, for which reason we refrain from this generalization.)
Therefore we will require some natural conditions on the behavior of the Riemannian exponential map associated with $M$.
To state those, let $V_x\subset T_xM$ denote the (automatically star-shaped) largest open set on which $\exp_x$ is defined.
We will assume that there exist constants $r_0>0$ and 
$0 < \kappa < \pi/2$ such that the following holds:

\newlength{\mylongest}
\setlength{\mylongest}{\widthof{(M2$'$)}}
\addtolength{\mylongest}{\labelsep}
\SetLabelAlign{CenterWithParen}{\makebox[\mylongest]{#1}}

\begin{enumerate}[align=CenterWithParen,labelwidth=\mylongest,leftmargin=!]
\item[\labeltext{(M1)}{M1}(M1)]
(injectivity condition)
For $x\in M$ let $B_{r_0}^{T_xM}(0)\coloneqq\{v\in T_xM\,:\,g_x(v,v)<r_0^2\}$ denote the open ball in tangent space of radius $r_0$.
Then the Riemannian exponential $\exp_x$ is injective on $V_x\cap B_{r_0}^{T_xM}(0)$ for all $x\in M$.
\item[\labeltext{(M2$'$)}{Lipschitz}(M2$'$)] (strong Lipschitz condition) $\overline M$ is strongly Lipschitz, meaning for each $x\in \partial M$ there exists an open neighborhood $U$ of $x$ in $\embedman$, a smooth local chart $h\colon U \to \R^m$ of the manifold $\embedman$ with $h(x)=0$, $\gamma\colon \R^{m-1}\to \R$ a Lipschitz function with $\gamma(0)=0$ and $\delta > 0$ such that 
$$h(U\cap M) = \{(x', \gamma(x')+t)\,:\, 0 < t < \delta, x'\in \R^{m-1}, \abs{x'}<\delta \}.$$%
\end{enumerate}
Note that \ref{M1} is for instance fulfilled when $r_0$ is chosen as the injectivity radius of the embedding manifold $M_0$.
Both above conditions imply the following cone condition:
\begin{enumerate}[align=CenterWithParen,labelwidth=\mylongest,leftmargin=!]
  \item[\labeltext{(M2)}{M2}(M2)] 
(cone condition)
There exists a measurable map $\iota\colon M\times \R^m \to TM$ such that, setting $\iota_x(v)=\iota(x,v)$ for ${x\in M}, {v\in \R^m}$, we have that $\iota_x$ is an isometric isomorphism between $\R^m$ and $T_xM$, 
and $V_x$ contains $\iota_x(\cone_{r_0,\kappa})$, where $\cone_{r_0,\kappa}$ is the cone
\begin{equation*}
  \cone_{r_0, \kappa} \coloneqq \left\{w \in \R^m \,:\, 
  0 \leq \abs w < r_0, \,\vert w\cdot e_1 \vert \geq \cos(\kappa) \vert w \vert \right\}
\end{equation*}
of height $r_0$ and maximum angle $\kappa$ with the first standard Euclidean basis vector $e_1\in\R^m$. The dot $\cdot$ denotes the standard inner product in $\R^m$.
\end{enumerate}
The definition of Lipschitz domains in manifolds follows \cite{MiTa99}. In the Euclidean setting, it is well known that a Lipschitz domain fulfills the cone condition, see e.g.\ \cite{AdFo03}. This similarly holds in our setting, which can be proven in a similar fashion as in \cite{MiTa99}.
For manifolds without boundary, the largest $r_0$ coincides with the usual injectivity radius,
and we can replace the cone $\cone_{r_0, \kappa}$ by the open ball $B_{r_0}^m(0)\subset\R^m$ centered at the origin with radius $r_0$.
For manifolds with boundary, the cone condition is not automatically satisfied: Consider for example the submanifold 
$$\overline M = \left\{(x,y)\in \R^2\,:\,y \geq \sqrt{\abs x}, y \leq 1\right\}$$
of $\R^2$ with the inherited metric. Then no cone can be positioned at $(0,0)$. 

Using the isometric linear mapping {$\iota_x \colon \R^m \to T_xM$} from \ref{M2} and setting {$U_x\coloneqq\iota_x^{-1}(V_x)\cap B_{r_0}^m(0)$},
we can define a local parametrization of $M$ by {$\parametrization_x \colon U_x\to M$, $\parametrization_x\coloneqq\exp_x\circ\iota_x$.}
This parametrization is known as \emph{normal coordinates around $x$} (\cf~\cref{fig:cone}).
For later notational convenience, we extend $\parametrization_x$ measurably (but arbitrarily) beyond $U_x$.
Let us further introduce the notations $U_x^\eps\coloneqq U_x\cap B_\eps^m(0)$ and $D_\eps^M(x)\coloneqq \parametrization_x(U_x^\eps)$ for $0<\eps<r_0$.
The neighborhood $D_\eps^M(x)$ of $x$ consists of all points in the $\eps$-neighborhood of $x$ that can be reached from $x$ via the Riemannian exponential map
(it coincides with the $\eps$-neighborhood of $x$ for geodesically convex manifolds $M$ or for points $x$ with Riemannian distance to the boundary larger than $\eps$).
Thus,  condition \ref{M1} ensures that the Riemannian mean of $x$ and any $y\in D_\eps^M(x)$ is well-defined.
\begin{figure}
\begin{center}
\setlength{\unitlength}{0.03\linewidth}%
\begin{picture}(20,9)
\put(0,0){\includegraphics[width=20\unitlength]{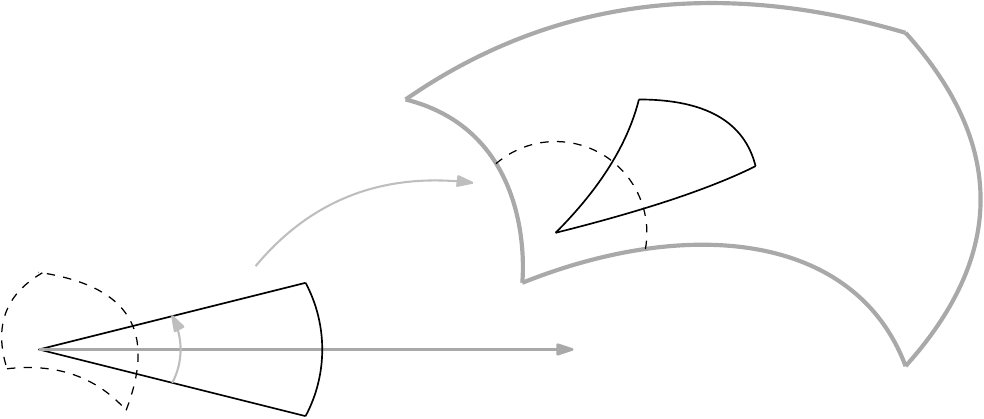}}
\put(-0.4,1){\small$0$}
\put(18,4){\small$M$}
\put(15,2.5){\small$\partial \overline M$}
\put(11,0.85){\small$e_1$}
\put(6.7,0.85){\small$r_0 e_1$}
\put(6.56,1.40){\circle*{0.25}}
\put(2.775,2.4){\small$2\kappa$}
\put(4.3,0.75){\small$\cone_{r_0, \kappa}$}
\put(6,4.9){\small$\parametrization_x$}
\put(11,3.35){\small$x$}
\put(0.3,3.5){\small$U_x^\eps$}
\put(9.6,6){\small$D_\eps^M(x)$}
\end{picture}
\end{center}
\caption{The parametrization $\parametrization_x$ that maps the cone $\cone_{r_0,\kappa}\subset U_x \subset \R^m$ into $M$ and the neighborhoods $U_x^\eps\subset U_x$ onto $D_\eps^M(x)$.}
\label{fig:cone}
\end{figure}

We can now define the loss functional and the corresponding sampling framework.
Let 
\begin{equation*}
  \discprod_\eps \coloneqq \{(x,y)\in M\times M\,:\,y\in D_\eps^M(x)\}.
\end{equation*}
For a fixed \emph{sampling radius} $\eps\in(0,r_0)$, we consider
a finite set $\sample\subset \discprod_\eps$
of discrete samples as training data for the encoder. We then define the \emph{discrete sampling loss functional}
\begin{equation}\label{eq:sampling_energy}
E^{\sample}[\phi] \coloneqq
\frac1{|\sample|}\sum_{(x,y)\in\sample}\left(\distortion(|\discGrad{x}{y}\phi|) + \lambda \; |\discHess{x}{y}\phi|^2\right),
\end{equation}
with $\lambda>0$. The first and second order difference quotients are defined as 
\begin{equation*}
\discGrad{x}{y}\phi\coloneqq\frac{\phi(y)-\phi(x)}{d_M(x,y)}, ~~ \discHess{x}{y}\phi\coloneqq 8\frac{\av_{\R^l}(\phi(x), \phi(y))-\phi(\av_M(x,y))}{d_M(x,y)^2},
\end{equation*}
where $\av_{\R^l}(a,b)=(a+b)/2$ denotes the linear average in $\R^l$  
and $\distortion:[0,\infty)\to[0,\infty)$ a function with a unique minimum $\distortion(1)=0$.
Then,  the first term in $E^{\sample}$ has
a strict minimum for  $|\discGrad{x}{y}\phi|=1$. This promotes $\abs{\phi(x)-\phi(y)}\approx d_M(x,y)$ and thus prefers 
an approximate isometry with low distortion.
{An example of such a function, used in the rest of this paper,  is  $$\distortion(s) \coloneqq s^2+\frac{(1+c^2)^2}{s^{2}+c^2}-2-c^2,~~ c>0. $$}
Let us emphasize that $\distortion(s)$ is chosen such that it is finite in $s=0$ and that the composition $x\mapsto \distortion(\norm x)$, for $x \in \R^l$, is smooth.
The second term in $E^{\sample}$ penalizes the deviation of intrinsic averages on $\phi(M)$ from extrinsic ones in $\R^l$.
Note that this does not only penalize bending or any extrinsic curvature of $\phi(M)$ in $\R^l$,
but in addition it also penalizes deviation of the inplane parametrization of $\phi(M)$ from a linear one.
One further notices that the functional is rigid motion invariant by construction, \ie, composition of $\phi$ with a rigid motion does not change the energy.
\subsection{Monte Carlo limit for dense sampling}\label{subsec:MonteCarlo}
We next discuss the Monte Carlo limit of the sampling loss functional \eqref{eq:sampling_energy} as the number of samples tends to infinity.
This limit depends on the sampling strategy.
We consider a pair of random variables $(X,Y)$ taking values in $\discprod_\eps$, with a distribution that will be defined below. Let $\sample[n]$ be a sequence of random finite sets with $\sample[n]\subset \discprod_\eps, $ each containing $n$ independent samples of $(X,Y)$.  By the strong law of large numbers~\cite[Chapter 17.3]{Lo77}, we then have almost sure convergence to a \emph{continuous sampling loss functional}
\begin{equation*}
\energy^\eps[\phi] \coloneqq \mathbb{E}\left[\distortion(|\discGrad{X}{Y}\phi|) + \lambda \; |\discHess{X}{Y}\phi|^2 \right]
=\lim_{n\to\infty} E^{\sample[n]}[\phi].
\end{equation*}
Let us consider three exemplary ways of sampling, leading to three variants $\energy^\eps_i$, $i=1,2,3$ of $\energy^\eps$:
\begin{enumerate}[label=(S\arabic*)]
\item \label{S1} Sampling $x \in M$ uniformly and then choosing $y\in D_\eps^M(x)$ uniformly.
Thus, denoting by $V_g$ the Riemann--Lebesgue volume measure on $M$ (\cf~\cite[Sec. 1.2]{Hebey96}),
we have 
$\PP[X\in A] = V_g(A)/V_g(M)$ and $Y$ is given via the conditional law $$\PP[Y\in A|X=x]=\frac{V_g(A\cap D_\eps^M(x))}{V_g(D_\eps^M(x))}$$ for any measurable $A\subset M$.
The limit is
\begin{equation}\label{eq:MonteCarlo1}
   \energy^\eps_1[\phi] \coloneqq \strokedint_M\!\! \strokedint_{D_\eps^M(x)}  \distortion(|\discGrad{x}{y}\phi|)+ \lambda \, |\discHess{x}{y}\phi|^2 \opd V_g(y)\opd V_g(x).
\end{equation}
\item \label{S2} Sampling $x \in M$ uniformly and then choosing $v \in U_x^\eps$  uniformly
 and defining $y=\parametrization_x(v)$. 
We have $\PP[X\in A]= V_g(A)/{V_g(M)}$ as above, while $Y$ is given via the conditional law as 
$$\PP[Y \in A| X=x] = \frac{\mathcal L(\parametrization_x^{-1}(A)\cap U_x^\eps)}{\mathcal L(U_x^\eps)},$$
for any measurable $A \subset M$. Here, 
 $\mathcal L$ denotes the $m$-dimensional Lebesgue measure.
This gives
\begin{equation}\label{eq:MonteCarlo2}
   \energy^\eps_2[\phi]\coloneqq\strokedint_M\!\! \strokedint_{U_x^\eps} \distortion(|\discGrad{x}{\parametrization_x(v)}\phi|) + \lambda \, |\discHess{x}{\parametrization_x(v)}\phi|^2 \opd v \opd V_g(x),
\end{equation}
where $\opd v$ indicates the integration with respect to the Lebesgue measure $\mathcal L$.
\item \label{S3} Sampling pairs $(x,y) \in \discprod_\eps$ uniformly. 
We have $$\PP[(X,Y) \in A]=\frac{(V_g\otimes V_g)(A\cap \discprod_\eps)}{(V_g\otimes V_g)(\discprod_\eps)}$$
for any measurable $A \subset M\times M,$ so that
\begin{equation}\label{eq:MonteCarlo3}
   \energy^\eps_3[\phi]\coloneqq \dfrac{1}{\int_M \!\! V_g(D_\eps^M(x)) \opd V_g(x)}\int_M \!\! \int_{D_\eps^M\!(x)} \distortion(|\discGrad{x}{y}\phi|)\! +\! \lambda \, |\discHess{x}{y}\phi|^2 \opd V_g(y)\opd V_g(x).
\end{equation}
\end{enumerate} 
In words, one could say that in both sampling strategies \ref{S1} and \ref{S3}, first $x$ and $y$ are sampled uniformly in $M$. Then in $\ref{S1}$, $x$ is kept while $y$ is resampled until $y \in D_\eps(x)$. In contrast, in \ref{S3}, both $x$ and $y$ are rejected and resampled if $(x,y) \notin \discprod_\eps$. These strategies are in general not the same (for example if $M$ has a boundary).
All these functionals can be observed as an instance of
\begin{equation}\label{eq:abstract_functional}
   \energy^\eps[\phi] \coloneqq \int_M\!\! \int_{D_\eps^M\!(x)}  \left(\distortion(|\discGrad{x}{y}\phi|) + \lambda \, |\discHess{x}{y}\phi|^2\right) \densitymm(x,y) \opd V_g(y)\opd V_g(x),
\end{equation}
where the density $\densitymm  \in L^1(M\times M)$ satisfies the following conditions:
\begin{enumerate}[label=(D\arabic*), itemsep=.3\baselineskip]
	\item $\int_M \int_{D_\eps^M\!(x)}\densitymm(x,y)\opd V_g(y)\opd V_g(x) = 1$ for all $\eps > 0$. \label{D1}
	\item There exist $c_\rho,C_\rho>0$ with $C_\rho \geq \eps^m\densitymm(x,y) \geq c_\rho$ for almost every $x\in M$, $y\in D^M_\eps(x)$, and $\eps>0$. \label{D2}
	\item $\eps^m \densitymm(x,\parametrization_x(\eps w))$ converges pointwise to some function $\rho(x,w)$ as $\eps\to0$ (using the regularity of $M$) for almost every $x \in M, w \in B_1^m(0)$. \label{D3}
\end{enumerate}
In fact, for all three functionals \eqref{eq:MonteCarlo1}, \eqref{eq:MonteCarlo2}, \eqref{eq:MonteCarlo3}
the limit in \ref{D3} is the same, the constant function $\rho(x,w)=\frac{1}{V_g(M) \omega_m}$ with $\omega_m = \mathcal{L}(B_1^m(0))$.
\begin{remark}\label{rmrk:annulus}
One may also consider more general sampling strategies, an example being sampling on an annulus  {$D_\eps^M(x)\setminus D_{c\eps}^M(x)$} for some constant $0 < c < 1$. In this case, the limit density will be $$\rho(x,w)=\frac{1}{V_g(M)\omega_m(1-c^m)}\chi_{B_1^m(0)\setminus B_c^m(0)(w)}.$$ Note that the lower bound in \ref{D2} is not satisfied in this case. However, all our results still hold, with some minor modifications in the proofs, if the lower bound assumption  only holds for $y$ from an open subset of $D^M_\eps(x)$ of fixed measure, for all $x \in M$.
\end{remark}

Functionals of the type \eqref{eq:abstract_functional} were used in \cite{BoBrMi01} in the context of (fractional) Sobolev spaces with applications in the study of variational problems, e.g., in \cite{AuKo09,LePaScSp15}.

Now, let us rewrite functional \eqref{eq:abstract_functional}
using normal coordinates around each $x$, which will later simplify the limit analysis of the functional. To this end, we note that, for any measurable function $f_x: M \to \R$, we have
\begin{align*}
\int_{D_\eps^M(x)} f_x(y)\opd V_g(y)
= \int_{B_1^{m}(0)}  
f_x(\parametrization_x(\eps w))w^\eps(x,w) \sqrt{\det G_x(\parametrization_x (\eps w))}\eps^m\opd w \label{eq:trafo_final_step}.
\end{align*}
In the above, $w^\eps(x,w)\coloneqq\chi_{U_x^\eps}(\eps w)$,  for $\chi_B$ the characteristic function of a set $B$, 
and $G_x\in\R^{m\times m}$ denotes the matrix representation of the Riemannian metric $g$, in Riemannian normal coordinates around $x$.
Taking $f_x(y)=(\distortion(|\partial_{(x,y)}\phi|)+\lambda |\partial^2_{(x,y)} \phi|^2)\densitymm(x,y)$, one obtains
\begin{equation} \label{eq:integral_over_sphere1}
\energy^\eps[\phi]
=\int_M \int_{B_1^{m}(0)} \left(\distortion(|\partial_{(x,\parametrization_x(\eps w))}\phi|) + \lambda |\partial^2_{(x,\parametrization_x(\eps w))} \phi|^2\right)
\rho^\eps(x,w)  \opd w \opd V_g(x)
\end{equation}
for $\rho^\eps(x,w) \coloneqq w^{\eps}(x,w) 
\sqrt{\det G_x(\parametrization_x(\eps w))} \eps^m \densitymm(x,\parametrization_x(\eps w))$. 
From the compactness of $\overline{M}$ and \ref{D2} we deduce that there exist $\tilde c,\tilde C>0$ with
\begin{equation}\label{eqn:weightBound}
\tilde C_\rho \geq \rho^\eps(x,w) \geq\tilde c_\rho, 
\qquad\text{for all }x\in M,w\in U_x, 0<\eps<r_0.
\end{equation}
Furthermore, using Taylor expansion of $G_{x}(\parametrization_x (\eps w))$ around $\eps=0$ and the local Lipschitz property of the determinant function,  we have~\cite[Proposition 3.1, Lemma 3.5]{Sa96}
\begin{equation}\label{eq:volume_element_expansion}
\det G_x(\parametrization_x(\eps w))=1-\frac13 \text{Ric}(\iota_x\tfrac w{|w|},\iota_x\tfrac w{|w|}) |w|^2 \eps^2 + O(|w|^3\eps^3),
\end{equation}
where $\text{Ric}$ is the Ricci curvature. Due to smoothness of the metric and 
compactness of the manifold the remainder term is bounded uniformly in $x$.
Together with assumption \ref{D3},  and the fact that $w^{\eps}(x,w)$ converges pointwise to $1$ for every $x\in M$
 and $w\in B_1^{m}(0)$,  this implies
\begin{equation}\label{eq:bound-h}
\rho^\eps(x,w) \to \rho(x,w) \text{ as } \eps \to 0,~ \text{ for a.e. } x \in M, w \in B_1^{m}(0).
\end{equation}


\section{Analysis of the continuous sampling loss functional}\label{sec:analysis}
In this section we study the continuous sampling loss functional \eqref{eq:abstract_functional}, or equivalently \eqref{eq:integral_over_sphere1}. We show existence of a minimizer in an appropriate class of functions.
Furthermore, we show convergence to a local energy functional for vanishing sampling radius $\eps$.
(Note that the continuous sampling loss functional \eqref{eq:abstract_functional} is nonlocal,
due to the double integral over pairs of points.)

\subsection{Preliminaries on function spaces}\label{subsec:function_spaces}
To study the nonlocal energy \eqref{eq:abstract_functional}, we will need suitable Riemannian  
notions of derivatives for functions \mbox{$\phi\colon M \to \R^l$}.
The \emph{Riemannian gradient (Jacobian)} $\Grad\phi(x)=(\Grad\phi_1(x),\ldots,\Grad\phi_l(x))\in(T_xM)^l$ of $\phi$ can be defined via the identity
\begin{equation*}
\frac{\mathrm{d}}{\mathrm{d}t}(\phi_j\circ\exp_x)(tv)\vert_{t=0}=g(\Grad\phi_j(x),v),~ \text{ for all }v\in T_xM.
\end{equation*}
The Riemannian Hessian $\Hess\phi(x)=(\Hess\phi_1(x),\ldots,\Hess\phi_l(x))$ is the linear operator 
$T_xM\to(T_xM)^l$ defined by $\Hess\phi_j(x)[v]=\nabla_v\Grad\phi_j(x)$, $j=1,\ldots,l$,
where $\nabla$ is the Levi-Civita connection and $\nabla_v$ the covariant derivative in direction $v$. We have the identity \cite[Chp.\,5]{Absil09}
\begin{equation*}
\frac{\mathrm{d}^2}{\mathrm{d}^2t}(\phi_j\circ\exp_x(tv))\vert_{t=0}
= g_x(\Hess\phi_j(x)[v],v)~ \text{ for all }v\in T_xM.
\end{equation*}
For notational simplicity we will sometimes apply the Riemannian metric $g$ on $(T_xM)^l\times T_xM$, which then is to be understood componentwise, \ie, we use the notation
$g(A,v) = (g(A_j,v))_{j=1,\ldots, l}$ for an $l$-tuple \mbox{$A=(A_1,\ldots,A_l)$} of tangent vectors.
Similarly, we write $g(A,B)=\sum_{j=1}^lg(A_j,B_j)$ for $A,B\in(T_xM)^l$ and $g(H,K)=\sum_{j=1}^mg(H[v_j],K[v_j])$ for linear operators $H,K:T_xM\to T_xM$, where $v_1,\ldots,v_m$ is any orthonormal basis of $T_xM$.
In the above notation the previous identity reads 
$$\frac{\mathrm{d}^2}{\mathrm{d}^2t}(\phi\circ\exp_x(tv))\vert_{t=0}= g(\Hess\phi(x)[v],v) ~ \text{ for all }v\in T_xM.$$%
Given $(x,y) \in \discprod_{r_0}$, denote by 
\begin{equation*}
  \PT_{y \leftarrow x}\colon T_x M \to T_y M
\end{equation*}
the parallel transport operator along the unique minimizing geodesic $\gamma\colon [0,1]\to M$ with $\gamma(0)=x$ and $\gamma(1)=y$. 
The gradient of a differentiable function is Lipschitz of constant $L_{\Grad}(\phi)$ if
\begin{equation*}
\forall (x,y) \in \discprod_{r_0}, \quad \norm{\PT_{x\leftarrow y}\Grad\phi(y)-\Grad\phi(x)} 
\leq L_{\Grad}(\phi) d_M(x,y),
\end{equation*}
The Hessian of a twice differentiable function $\phi$ is Lipschitz of constant $L_{\Hess}(\phi)$ if
\begin{equation*}
\forall (x,y) \in \discprod_{r_0}, \quad \norm{\PT_{x\leftarrow y}\,\Hess\phi(y)\,\PT_{y\leftarrow x}-\Hess\phi(x)} \leq L_{\Hess}(\phi)d_M(x,y).
\end{equation*}
In both cases, $\|A\|=\sqrt{g(A,A)}$, with the summation convention as defined above. 
We write $\phi \in C^{2,1}(M, \R^l)$ for such functions~\cite[Sec. 10.4]{Bo22}. We have the following estimate, where from now on we use notation
\begin{equation*}
\bar w\coloneqq\tfrac{w}{|w|},\qquad\text{for any }w\in\R^m.
\end{equation*}

\begin{lemma}\label{lemma:taylor} Let $\phi \in C^{2, 1}(M, \R^l)$. Then, for all $x \in M, w\in U_x$ we have 
\begin{align}
\label{eq:first-order-taylor-r}
\abs*{\discGrad{x}{\parametrization_x (w)}\phi - g_x(\Grad\phi(x), \iota_x \bar w)} \leq  &\tfrac 1 2 L_{\Grad}(\phi) |w|, \\
\label{eq:second-order-taylor-r}
\abs*{\discHess{x}{\parametrization_x (w)}\phi - g_x(\Hess\phi(x)[\iota_x \bar w], \iota_x \bar w)} \leq & \tfrac 5 6 L_{\Hess}(\phi) |w|.
\end{align}
\end{lemma}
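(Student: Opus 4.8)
I would prove both estimates by reducing them to one‑dimensional Taylor expansions along the radial geodesic emanating from $x$. Fix $x\in M$ and $w\in U_x$, so $|w|<r_0$, put $v=\iota_x(w/|w|)$ (hence $|v|=1$ and $\iota_x(w)=|w|v$), and consider the geodesic $\gamma\colon[0,1]\to M$, $\gamma(t)\coloneqq\parametrization_x(tw)=\exp_x(t\iota_x(w))$, which is well defined because $U_x$ is star‑shaped about $0$. By \ref{M1} the restriction $\gamma|_{[0,t]}$ is the unique minimizing geodesic from $x$ to $\gamma(t)\in D_{r_0}^M(x)$, so $d_M(x,\parametrization_x(w))=|w|$, $d_M(x,\gamma(t))=t|w|$, and, by the definition of the Riemannian mean, $\av_M(x,\parametrization_x(w))=\exp_x(\tfrac12\iota_x(w))=\parametrization_x(w/2)$; in particular both Lipschitz inequalities of \cref{sec:function_spaces} apply to every pair $(x,\gamma(t))$. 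Setting $F(t)\coloneqq\phi(\gamma(t))\in\R^l$, the identities recalled before the lemma give $F'(0)=g_x(\Grad\phi(x),\iota_x(w))=|w|\,g_x(\Grad\phi(x),v)$ and, using $\nabla_{\gamma'}\gamma'=0$, $F''(t)=g_{\gamma(t)}(\Hess\phi(\gamma(t))[\gamma'(t)],\gamma'(t))$ with $\gamma'(t)=\PT_{t\leftarrow0}^\gamma\gamma'(0)$ of constant norm $|w|$; thus $F''(0)=|w|^2 g_x(\Hess\phi(x)[v],v)$.

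For \eqref{eq:first-order-taylor-r} I use $F(1)=F(0)+F'(0)+\int_0^1(1-t)F''(t)\opd t$, so that $\discGrad{x}{\parametrization_x(w)}\phi-g_x(\Grad\phi(x),v)=\tfrac1{|w|}\int_0^1(1-t)F''(t)\opd t$. The Lipschitz bound on $\Grad\phi$ forces $\norm{\Hess\phi(y)}\le L_{\Grad}(\phi)$ as an operator $T_yM\to(T_yM)^l$ (differentiate the parallel‑transported gradient), whence $|F''(t)|\le L_{\Grad}(\phi)|w|^2$ by Cauchy--Schwarz, and $\int_0^1(1-t)\opd t=\tfrac12$ yields the bound $\tfrac12 L_{\Grad}(\phi)|w|$.

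For \eqref{eq:second-order-taylor-r} I first rewrite, using $d_M(x,\parametrization_x(w))=|w|$ and $\av_M(x,\parametrization_x(w))=\parametrization_x(w/2)$, the identity $\discHess{x}{\parametrization_x(w)}\phi=|w|^{-2}\bigl(4F(0)-8F(\tfrac12)+4F(1)\bigr)$, the standard second difference. Taylor expansion with integral remainder of $F(1)$ and $F(\tfrac12)$ around $0$, together with $\int_0^1(1-t)\opd t=\tfrac12$ and $\int_0^{1/2}(\tfrac12-t)\opd t=\tfrac18$, gives $4F(0)-8F(\tfrac12)+4F(1)-F''(0)=4\int_0^1(1-t)\bigl(F''(t)-F''(0)\bigr)\opd t-8\int_0^{1/2}(\tfrac12-t)\bigl(F''(t)-F''(0)\bigr)\opd t$. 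Since parallel transport is a $g$‑isometry, $F''(t)-F''(0)=g_x\bigl((\PT_{0\leftarrow t}^\gamma\Hess\phi(\gamma(t))\PT_{t\leftarrow0}^\gamma-\Hess\phi(x))[\gamma'(0)],\gamma'(0)\bigr)$, so the Lipschitz bound on $\Hess\phi$ and Cauchy--Schwarz give $|F''(t)-F''(0)|\le L_{\Hess}(\phi)\,d_M(x,\gamma(t))\,|w|^2=L_{\Hess}(\phi)|w|^3 t$. Inserting this and evaluating $\int_0^1(1-t)t\opd t=\tfrac16$ and $\int_0^{1/2}(\tfrac12-t)t\opd t=\tfrac1{48}$ bounds the right‑hand side by $\bigl(4\cdot\tfrac16+8\cdot\tfrac1{48}\bigr)L_{\Hess}(\phi)|w|^3=\tfrac56 L_{\Hess}(\phi)|w|^3$; dividing by $|w|^2$ finishes the proof.

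The only genuinely delicate point is bookkeeping: verifying that $\gamma|_{[0,t]}$ is the unique minimizing geodesic to which the Lipschitz conditions refer (from \ref{M1}), and matching the componentwise parallel‑transport expression for $F''(t)-F''(0)$ to the parallel‑transport form defining $L_{\Hess}(\phi)$, together with checking that the operator/Frobenius norms on the $l$‑tuples interact with $g$ via Cauchy--Schwarz as claimed. Once this is in place, the result reduces to the elementary integrals above.
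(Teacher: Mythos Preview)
Your proof is correct and follows essentially the same route as the paper: both reduce to a one-dimensional Taylor expansion of $t\mapsto\phi(\exp_x(t\iota_x w))$ along the radial geodesic and control the remainder via the Lipschitz constants of $\Grad\phi$ and $\Hess\phi$. The paper packages the remainders as $R_1,R_2$ with the bounds $|R_1|\le\tfrac12 L_{\Grad}(\phi)r^2$, $|R_2|\le\tfrac16 L_{\Hess}(\phi)r^3$ quoted from \cite[Sec.~10.4]{Bo22} and then uses the identity $\discHess{x}{\parametrization_x(w)}\phi-g_x(\Hess\phi(x)[v],v)=(4R_2(x,w)-8R_2(x,w/2))/r^2$, which yields exactly your constant $4\cdot\tfrac16+8\cdot\tfrac1{48}=\tfrac56$; you simply make the same computation explicit via the integral form of the remainder.
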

\begin{proof}
Given $v \in \iota_x(U_x)$ with $\|v\|=1$, the first and second order Taylor expansion of $r\mapsto\phi\circ\exp_x(rv)$ around zero yields
\begin{align*}
\phi(\exp_x(rv))
&=\phi(x)+g_x(\Grad\phi(x), rv)+R_1(x,rv)\\
&=\phi(x)+g_x(\Grad\phi(x), rv)+\tfrac 1 2 g_x(\Hess\phi(x)[rv], r v)+R_2(x,rv),
\end{align*}
where, following \cite[Sec. 10.4]{Bo22}, the remainder terms can be bounded by
$\abs*{R_1(x, r v)} \leq \tfrac 1 2L_{\Grad}(\phi) r^2$ and $\abs*{R_2(x, rv)} \leq \tfrac 1 6L_{\Hess}(\phi) r^3$, respectively.
Choosing $v=\iota_x \bar{w}$ and $r=|w|$ and $r=\frac{|w|}{2}$, respectively, we get identities 
\begin{align*}
\discGrad{x}{\parametrization_x (w)}\phi - g_x(\Grad\phi(x), \iota_x \bar{w})
&=\tfrac{R_1(x,\iota_x \bar{w})}{|w|},\\
\discHess{x}{\parametrization_x (w)}\phi - g_x(\Hess\phi(x)[\iota_x \bar{w}], \iota_x \bar{w})
&=\tfrac{4R_2(x,\iota_x w)-8R_2(x,\iota_x \tfrac{w}{2})}{|w|^2}.
\end{align*}
From these, the claim directly follows via the already verified bounds for the remainder terms $R_1$, $R_2$ on the right hand sides.
\end{proof}
The Sobolev spaces $H^1(M,\R^l)$ and $H^2(M,\R^l)$ are defined as the closure of $C^{2,1}(M,\R^l)$ under the norms
\begin{align*}
\|\phi\|_{H^1(M,\R^l)}^2
&\coloneqq\sum_{j=1}^l\int_M|\phi_j|^2
+g(\Grad\phi_j,\Grad\phi_j)
\opd V_g,\\
\label{eq:sobolev_norm}
\|\phi\|_{H^2(M,\R^l)}^2
&\coloneqq\sum_{j=1}^l\int_M|\phi_j|^2
+g(\Grad\phi_j,\Grad\phi_j)
+g(\Hess\phi_j,\Hess\phi_j)\opd V_g.
\end{align*} 
A classical property of Sobolev spaces on a compact manifold is Rellich's embedding theorem, which implies that $H^2(M,\R^l)$ is compactly embedded in $H^1(M,\R^l)$ and in $L^2(M,\R^l)$.
For further details on Sobolev spaces on 
manifolds we refer to \cite{Hebey96} and to \cite{aubin_espaces_1976} for the Rellich embedding theorem in particular.
The subset 
of functions with zero mean is denoted by $\dot{H}^2(M,\R^l)$. 
We will also make use of the following equivalent norms.
\begin{proposition}\label{prop:norm_equivalence}
Let 
$V=\cone_{1,\kappa}$, where $\cone_{1,\kappa}\subset \R^m$ is a cone as defined in \ref{M2}. Then, the quantity
 \begin{equation}
\absav{W} \coloneqq \left(\int_{V} g(W,\iota_x\bar w)^2 \opd w\right)^{\frac12}
\end{equation}
for $W\in T_x M$ defines a norm on $T_x M$, which is uniformly equivalent to the standard norm based on the metric.
Let $L_{\text{sym}}(T_x M, T_xM)$ denote the space of symmetric endomorphisms on $T_x M$. Then a norm on this space is given by
\begin{equation}
\absav{A} \coloneqq \left(\int_{V} g(A[\iota_x\bar w],\iota_x\bar w)^2 \opd w\right)^{\frac12},
\end{equation}
and it is uniformly equivalent to the standard norm on $L_{\text{sym}}(T_x M, T_xM)$.
As a consequence, the norm
\begin{equation}
	\norm{\phi}_{H^2_{\text{av}}(M,\R^l)} \coloneqq \left(\int_M \absav{\Grad\phi(x)}^2  +  \absav{\Hess\phi(x)}^2 \opd V_g(x)\right)^{\frac 1 2}
\end{equation}
is equivalent to the standard $H^2$-norm on $\dot H^2(M,\R^l)$.
\end{proposition}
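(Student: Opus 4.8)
The plan is to establish the two pointwise statements about $\absav{\cdot}$ first and then assemble the global $H^2$ statement by integrating over $M$. For the pointwise claims, fix $x \in M$ and work entirely in the tangent space $T_xM$ via the isometry $\iota_x$, so that everything reduces to a statement about vectors $a \in \R^m$ and symmetric matrices $A \in \R^{m\times m}$ with the Euclidean inner product: I need to show $a \mapsto \bigl(\int_V (a\cdot \bar w)^2 \opd w\bigr)^{1/2}$ and $A \mapsto \bigl(\int_V (A\bar w \cdot \bar w)^2 \opd w\bigr)^{1/2}$ are norms comparable to the Euclidean norms, with constants independent of $x$. Crucially, since $V = \frac{1}{r_0}\cone_{r_0,\kappa}$ is a fixed set not depending on $x$ (the only $x$-dependence was already absorbed into $\iota_x$), these are genuinely finitely many identical scalar problems, so uniformity in $x$ is automatic once I prove comparability for the single fixed integration domain $V$.

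For the vector case, note that $Q(a) \coloneqq \int_V (a\cdot \bar w)^2\opd w = a^\top \Bigl(\int_V \bar w\, \bar w^\top \opd w\Bigr) a$ is a quadratic form with matrix $S \coloneqq \int_V \bar w\,\bar w^\top\opd w$. Since $V$ has nonempty interior and is symmetric under $w \mapsto -w$ and contains directions spanning all of $\R^m$ (the cone $\cone_{r_0,\kappa}$ around $e_1$ with opening angle $\kappa \in (0,\pi)$ has nonempty interior, hence its normalized directions span $\R^m$), $S$ is symmetric positive definite; its smallest and largest eigenvalues give the two-sided bound $\lambda_{\min}(S)\abs a^2 \le Q(a) \le \lambda_{\max}(S)\abs a^2$, which is exactly uniform equivalence of $\absav{\cdot}$ and the metric norm. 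Homogeneity and the triangle inequality (via Minkowski's inequality for the $L^2(V)$ integral) are immediate, so it is a norm. For the matrix case, the same idea applies: $A \mapsto \int_V (A\bar w\cdot \bar w)^2\opd w$ is a quadratic form on the space $L_{\text{sym}}$ of symmetric matrices, and I must check that $\int_V (A\bar w\cdot\bar w)^2\opd w = 0$ forces $A = 0$. This follows because $w \mapsto A\bar w\cdot \bar w$ is (the restriction to the unit sphere of) a homogeneous degree-$0$ rational expression whose numerator $A w\cdot w$ is a polynomial; vanishing on the open cone $\cone_{r_0,\kappa}$ forces the polynomial $Aw\cdot w$ to vanish identically on an open set of $\R^m$, hence everywhere, hence $A = 0$ by polarization. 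Positive-definiteness of the quadratic form on the finite-dimensional space $L_{\text{sym}}$ then again yields two-sided bounds by min/max eigenvalues, and Minkowski gives the triangle inequality.

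Having the two pointwise equivalences with $x$-independent constants $c_0, C_0$, i.e. $c_0\abs{W} \le \absav{W}\le C_0\abs{W}$ for all $W \in T_xM$ and likewise for symmetric endomorphisms $A$ (note $\Hess\phi(x)$ is symmetric, being the Hessian of a scalar function with respect to the Levi-Civita connection, so it indeed lies in $L_{\text{sym}}(T_xM,T_xM)$), I integrate the squared inequalities over $M$ with respect to $\dVg$. Summing over the $l$ components of $\phi$ and using $\absav{\Grad\phi}^2 = \sum_j \absav{\Grad\phi_j}^2$ and similarly for the Hessian, I obtain
\begin{equation*}
c_0^2 \int_M \bigl(\abs{\Grad\phi}^2 + \abs{\Hess\phi}^2\bigr)\dVg \le \norm{\phi}_{H^2_{\text{av}}(M,\R^l)}^2 \le C_0^2\int_M\bigl(\abs{\Grad\phi}^2+\abs{\Hess\phi}^2\bigr)\dVg.
\end{equation*}
The right-hand side is bounded by $C_0^2\norm{\phi}_{H^2(M,\R^l)}^2$, and for the reverse direction on $\dot H^2(M,\R^l)$ I invoke the fact that on the zero-mean subspace of a compact manifold the full $H^2$-norm is controlled by the seminorm $\int_M(\abs{\Grad\phi}^2 + \abs{\Hess\phi}^2)\dVg$ — this is a Poincaré-type inequality: the $L^2$-norm of a zero-mean function is bounded by its gradient seminorm, which in turn (or directly) is bounded using the manifold analogue of standard elliptic estimates. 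Chaining these gives equivalence of $\norm{\cdot}_{H^2_{\text{av}}}$ and $\norm{\cdot}_{H^2}$ on $\dot H^2(M,\R^l)$.

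The main obstacle I anticipate is the injectivity of the matrix quadratic form — showing $\int_V (A\bar w\cdot\bar w)^2\opd w = 0 \implies A = 0$ — because it is the one place where the specific geometry of $V$ (that it is a genuine solid cone with positive opening, not, say, a lower-dimensional slice) actually enters; the polynomial-vanishing-on-an-open-set argument handles it cleanly, but one must be careful that $\cone_{r_0,\kappa}$ indeed has nonempty interior in $\R^m$, which holds precisely because $\kappa > 0$. A secondary technical point is confirming that all constants are genuinely uniform in $x$; this is where it matters that the paper has arranged, via \ref{M2}, for $\iota_x$ to be an \emph{isometric} isomorphism and for $V$ to be defined independently of $x$, so that after the change of variables there is literally one integral to estimate rather than a family.
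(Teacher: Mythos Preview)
Your proposal is correct and follows essentially the same route as the paper: reduce via the isometry $\iota_x$ to a fixed Euclidean problem on the cone $V$, observe that the resulting quadratic form is positive definite (the paper leaves this implicit, while you spell out the matrix $S$ for vectors and the polynomial-vanishing argument for symmetric endomorphisms), and then assemble the $H^2$ statement using Poincar\'e's inequality on $\dot H^2(M,\R^l)$. Your treatment of the symmetric-endomorphism case is in fact more careful than the paper's, which simply asserts that ``the same arguments apply.''
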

\begin{proof}
It is obvious that $W \mapsto \absav{W}$ is 1-homogeneous and satisfies the triangle inequality; its positive definiteness will follow from the norm equivalence shown below.
Using the notation $\|W\|^2={g}(W,W)$, we get 
$\absav{W}^2 \leq \mathcal{L}(V) \norm{W}^2$
by the Cauchy--Schwarz inequality.
Furthermore, using the isometry of $\iota_x$, we observe
\begin{equation*}
 	\absav{W}^2 
			=\int_V (\iota_x^{-1}(W) \cdot \bar w)^2 \opd w
			\geq c |\iota_x^{-1}(W)|^2
 			=  c \norm{W}^2,
 \end{equation*}
where $c>0$ is a constant independent of $x$.
Namely, the second term above is a norm on $\R^m$. This is due to the fact that $v \cdot w=0$ for all $w \in V$ implies $v=0$, since $V$ contains an open set. One follows an analogous argument for the norm on $L_{\text{sym}}(T_x M, T_x M)$. Note that for any linear operator $B\colon \R^m \to \R^m$, $Bw\cdot  w = 0$ for every $w \in V$ implies that $B$ is skew symmetric. Applying this observation to ${B=\iota_x \circ A \circ \iota_x} \in L_{\text{sym}}(\R^m, \R^m)$, we obtain ${\iota_x \circ A \circ \iota_x = 0}$.
From this it immediately follows that the norm $\norm{\cdot}_{H^2_{\text{av}}(M,\R^l)}^2$ is equivalent to the standard $H^2(M,\R^l)$-seminorm $\sum_{j=1}^l\int_Mg(\Grad\phi_j,\Grad\phi_j)+g(\Hess\phi_j,\Hess\phi_j)\opd V_g$, which is equivalent to the $H^2(M,\R^l)$-norm defined above by Poincar\'e's inequality on $\dot H^2(M,\R^l)$.
\end{proof}

\subsection{Existence of minimizer to nonlocal energy}\label{subsec:discrete_existence}
In this section we study the existence of a minimizer for the nonlocal energy \eqref{eq:abstract_functional}.
One way to approach this problem would be to study it on the function space naturally associated with the energy $\energy^\eps$. This space would be the completion of $C^{2,1}(M,\R^l)$ under the Hilbert space norm
$\|\phi\|_\eps^2\coloneqq\|\phi\|_{L^2(M,\R^l)}^2+|\phi|_\eps^2$
with
\begin{equation*}
|\phi|_\eps^2\coloneqq\int_M\int_{D_\eps^M(x)}(|\discGrad{x}{y}\phi|^2 + |\discHess{x}{y}\phi|^2)\densitymm(x,y)\opd V_g(y)\opd V_g(x).
\end{equation*}
On this space the energy would be coercive and likely admit a minimizer;
indeed (at least for our choice of $\distortion$), there exists $C>0$ such that $-C+\frac1C|\phi|_\eps^2\leq\energy^\eps[\phi]\leq C+C|\phi|_\eps^2$, independent of $\eps$.
This function space is closely related to $H^2(M,\R^l)$:
in \cite{BoBrMi01,Bo07} it is shown that, if $M=\R^m$ and $\phi\in L^2(M,\R^l)$ satisfies \mbox{$\liminf_{\eps\to0}|\phi|_\eps<\infty$}, then $\phi \in H^2(M, \R^l)$. A corresponding result just for the first order term was also shown for smooth, compact and connected Riemannian manifolds in \cite{KrMo18}. There are also equicoercivity or compactness results for sequences ${\phi^\eps}$ with uniformly bounded $\{|\phi^\eps|_\eps\}_{\eps>0}$ (see e.g.\ \cite[Corollary 6]{BoBrMi01}). All this would ultimately allow us to prove that minimizers of $\energy^\eps$ converge to a minimizer of a limit functional. 

We will not follow this approach, and instead only look for minimizers on a more restricted $\eps$-dependent set $\NNSpace$ of functions from $M$ to $\R^l$, which will be realized later via neural networks.
We prefer this latter approach for two reasons: First, we would need to translate the theory of \cite{BoBrMi01,Bo07} to functions on manifolds.
Second, the compactness results are sensitive to the particular choice of the sampling weight $\densitymm$.
In particular, while equicoercivity will probably hold for our examples \eqref{eq:MonteCarlo1}-\eqref{eq:MonteCarlo3},
one might also think of sampling weights $\densitymm(x,\cdot)$ that are supported on an annulus as noted in Remark~\ref{rmrk:annulus}.
in which case equicoercivity might potentially be lost (\cf~\cite[Counterexample 1]{BoBrMi01}), while it still holds in the set $\NNSpace$.

Since the energy $\energy^\eps$ is translation invariant,
functions in $\NNSpace$ are (without loss of generality) assumed to have zero mean.
In addition, we impose the following conditions on $\NNSpace$:
\begin{enumerate}[label=(H\arabic*)]
\item(closedness in $\dot L^2$) \label{H1} For every $0<\eps<r_0$ ,
we have that $\NNSpace $ is closed as a subset of 
$\dot{L}^2(M,\R^l)$, the set of $L^2$ functions on $M$ with zero mean.
\item(boundedness) \label{H2} $\NNSpace \subset C^{2,1}(M,\R^l)$,
and there exists a constant $C_L\geq 0$ such that
\begin{equation*}
\limsup_{\eps\to 0}\sup_{\phi^\eps \in \NNSpace}\eps \left(L_{\Grad}(\phi^\eps) + L_{\Hess}(\phi^\eps) \right)\leq C_L ,
\end{equation*}
\item(density in $\dot{H}^2(M,\R^l)$) \label{H3} 
 For every $\phi \in \dot{H}^2(M,\R^l)$, there exists a sequence $\{\phi^\eps\}_{\eps>0}$ with $\phi^\eps \in \NNSpace$ such that $\lim\limits_{\eps \to 0}\|\phi^\eps - \phi\|_{H^2(M,\R^l)}=0$.
\end{enumerate}
Here the notation $\eps \to 0$ is short for a sequence $(\eps_k)_{k\in \N}$ with $\eps_k\to 0$ for $k\to \infty$.
These conditions can in particular be satisfied if the functions in $\NNSpace$ are realizations of deep neural networks with sufficiently smooth nonlinear activation functions (such as the \emph{sigmoid/logistic} function or the \emph{softplus} function) and $\eps$-dependent bounds on the network parameters and weights. 
Indeed, bounds on the growth of derivatives of these realizations can be computed explicitly in terms of the numbers of layers and a priori bounds on the network weights by applying the chain rule (\cf~\cref{subsec:autoencoder}).
The closedness of the spaces $\NNSpace$ was shown in \cite[Proposition 3.5]{PeRaVo21}, while the density property was shown in an already classical article by Hornik et al.~\cite{HoStWh90}. 
Note that all these results were actually shown for neural networks defined on Euclidean domains, but they can be transferred to smooth $m$-dimensional manifolds $(M,g)$ embedded in $\R^n$ such that the metric $g$ is equivalent to the metric induced by the embedding. 
For more details we refer to \cite[Section 7]{BoGrKuPe19}. In \cref{sec:experiments}, we will provide more information on the specific network architecture employed by us to satisfy \ref{H1}--\ref{H3}.

The following theorem provides the existence of a minimizer for the $\NNSpace$-restricted energy
\begin{equation}\label{eqn:restrictedNonlocalFunctional}
\energy_{\mathcal F}^\eps[\phi]\coloneqq\begin{cases}
\energy^\eps[\phi]
&\text{if }\phi\in \NNSpace,\\
\infty&\text{else,}
\end{cases}
\end{equation}
and in addition the uniform boundedness of all minimizers in $H^2(M,\R^l)$, independent of $\epsilon$.

\begin{theorem}\label{thm:strong_discrete_existence}
Let conditions \ref{H1}--\ref{H2} be satisfied.
Then, for every small enough $\eps>0$, 
there exists a minimizer $\phi^\eps$ of energy \eqref{eqn:restrictedNonlocalFunctional}, and $\|\phi^\eps\|_{H^2(M,\R^l)} \leq C$ 
for a constant $C>0$ independent of $\eps$.
\end{theorem}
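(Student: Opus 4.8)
The plan is to use the direct method of the calculus of variations, with a preliminary step establishing an $\eps$-independent coercivity estimate on $\NNSpace(M,\R^l)$ that upgrades to a bound in the $H^2$-norm. First I would fix $\eps>0$ small enough that the statement in \ref{H2} gives a uniform bound $\eps(L_{\Grad}(\phi^\eps)+L_{\Hess}(\phi^\eps))\leq 2C_L$ (say) for all $\phi^\eps\in\NNSpace(M,\R^l)$. Pick any fixed reference function $\phi_0\in\NNSpace(M,\R^l)$ (e.g.\ the zero function if it lies in the space, or any admissible element), so that $\inf_{\NNSpace}\energy^\eps\leq\energy^\eps[\phi_0]\leq C_0$ with $C_0$ independent of $\eps$ by \ref{D1}, \ref{D2} and the growth of $\distortion$. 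Thus a minimizing sequence $\phi_k$ for $\energy_{\mathcal F}^\eps$ eventually satisfies $\energy^\eps[\phi_k]\leq C_0+1$, and since (for the given choice of $\distortion$, which grows quadratically) there is $C>0$ with $-C+\tfrac1C|\phi|_\eps^2\leq\energy^\eps[\phi]$, we obtain a uniform bound on the seminorm $|\phi_k|_\eps$, and — after recentering to zero mean, which is harmless by translation invariance and \ref{H1} — a uniform bound on $\|\phi_k\|_\eps$.

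The key point, and where I expect the main work to lie, is converting the $\eps$-dependent bound on $|\phi^\eps|_\eps$ into an $\eps$-\emph{independent} bound on $\|\phi^\eps\|_{H^2(M,\R^l)}$. Here is where \cref{lemma:taylor} and the regularity control \ref{H2} enter. Writing $\energy^\eps$ in the form \eqref{eq:integral_over_sphere1}, the triangle inequality together with \eqref{eq:first-order-taylor-r}, \eqref{eq:second-order-taylor-r} gives, for each $x$ and $w\in U_x$,
\begin{equation*}
\abs*{g_x(\Grad\phi^\eps(x),\iota_x\bar w)}\leq \abs*{\discGrad{x}{\parametrization_x(\eps w)}\phi^\eps}+\tfrac12 L_{\Grad}(\phi^\eps)\eps|w|,
\end{equation*}
and similarly for the Hessian with constant $\tfrac56 L_{\Hess}(\phi^\eps)\eps|w|$; by \ref{H2} the correction terms are bounded by a constant uniformly in $\eps$ and $\phi^\eps\in\NNSpace(M,\R^l)$. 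Squaring, integrating over $w$ in $V=\tfrac1{r_0}\cone_{r_0,\kappa}$ (which is contained in the region where the $w$-weight $w^\eps$ equals $1$ for $\eps<r_0$, and where \eqref{eqn:weightBound} gives $\rho^\eps\geq\tilde c$), and then over $x\in M$, I bound $\norm{\phi^\eps}_{H^2_{\mathrm{av}}(M,\R^l)}^2$ by $C(|\phi^\eps|_\eps^2+1)$; by \cref{prop:norm_equivalence} the left-hand side is equivalent to $\|\phi^\eps\|_{H^2(M,\R^l)}^2$ on $\dot H^2$. Combined with the previous paragraph this yields $\|\phi^\eps\|_{H^2(M,\R^l)}\leq C$ uniformly in $\eps$, which is exactly the asserted second conclusion and will simultaneously give the compactness needed for existence.

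With the uniform bound in hand, existence follows by the direct method. A minimizing sequence $\phi_k$ for $\energy_{\mathcal F}^\eps$ is bounded in $H^2(M,\R^l)$ by the argument above (applied with the bound $C_0+1$), hence by Rellich's theorem (stated in \cref{sec:function_spaces}) a subsequence converges weakly in $H^2$, strongly in $H^1$ and $L^2$, to some $\phi^\eps$; by \ref{H1} the limit lies in $\NNSpace(M,\R^l)$ (the zero-mean constraint passes to the $L^2$-limit, and $\NNSpace$ is $\dot L^2$-closed). It remains to check lower semicontinuity of $\energy^\eps$ along this sequence: since $\energy^\eps$ is nonlocal, its integrand depends on $\phi$ only through point evaluations $\phi(x),\phi(y),\phi(\av_M(x,y))$, and on the finite-dimensional-looking data $(\discGrad{x}{y}\phi,\discHess{x}{y}\phi)$; strong $L^2$-convergence (plus the boundedness giving, via \cref{lemma:taylor} again, equi-Lipschitz control that yields uniform convergence, or at least convergence of the difference quotients in $L^2(M\times M)$ against the bounded weight $\densitymm$) together with continuity of $\distortion$ and Fatou's lemma gives $\energy^\eps[\phi^\eps]\leq\liminf_k\energy^\eps[\phi_k]$. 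Hence $\phi^\eps$ is a minimizer. The only delicate point I anticipate is justifying passage to the limit inside $\distortion$ where $\distortion$ is merely continuous and only finite (not smooth) structure is assumed near $s=0$; this is handled by noting $\distortion$ is continuous and nonnegative, so Fatou applies directly to $\distortion(|\discGrad{x}{y}\phi_k|)$ once the difference quotients converge a.e.\ (along a further subsequence) — which they do once the $\phi_k\to\phi^\eps$ uniformly, a consequence of the uniform Lipschitz bound on $\Grad\phi_k$ from \ref{H2} plus $L^2$-convergence and compactness of $M$.
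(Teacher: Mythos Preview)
Your proposal is correct and follows essentially the same approach as the paper: use \cref{lemma:taylor} together with the lower bound $\rho^\eps\geq\tilde c$ on the cone $V=\tfrac1{r_0}\cone_{r_0,\kappa}$ (where $w^\eps\equiv1$) to control $\|\phi\|_{H^2_{\mathrm{av}}}$ by $\energy^\eps[\phi]$ plus a bounded Lipschitz correction from \ref{H2}, invoke \cref{prop:norm_equivalence}, and then run the direct method with Rellich compactness, \ref{H1}, and Fatou. The only cosmetic difference is that the paper obtains pointwise a.e.\ convergence of the integrand by extracting a further subsequence from the $L^2$-convergent one, whereas you upgrade to uniform convergence via the equi-Lipschitz bound; both routes feed Fatou equally well.
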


\begin{proof}
Below, we will exploit the energy reformulation \eqref{eq:integral_over_sphere1} of $\energy^\eps$.
We have  \mbox{$\parametrization_x(\cone_{r_0,\kappa})\subset D^M_{r_0}(x)$} by condition \ref{M2} on $M$ and thus in particular ${\parametrization_x(\cone_{\eps,\kappa})\subset D_\eps^M(x)}$ for all $\eps < r_0$. 
Let $V=\cone_{1, \kappa}$ as in \Cref{prop:norm_equivalence}. 
Recalling the definition of $w^\eps(x, w)=\chi_{U_x^\eps}(\eps w)$, we see 
\begin{equation}\label{eq:w-constant}
w^\eps(x, w)=1 \text{ for all }  x \in M, w \in V.
\end{equation}
Using first \eqref{eqn:weightBound} (which comes from \ref{D2}) and then the inequality $\abs a^2 \geq \frac{1}{2}\abs{b}^2-\abs{a-b}^2$ for $a=\partial_{(x, \parametrization_x(\eps w))}\phi$ and $b=g_x(\Grad\phi(x), \iota_x w)$, we get 
\begin{align*}
& \int_M \int_{D^M_\eps(x)} \abs{\partial_{(x,y)}\phi}^2 \densitymm(x,y) \opd V_g(y)\opd V_g(x)\\
	&= \int_M \int_{B_1^{m}(0)}
	\abs{\partial_{(x,\parametrization_x (\eps w))}\phi}^2 \rho^\eps(x,w)\opd w\opd V_g(x)\\
	&\geq
	\frac{\tilde c_\rho}{2} \int_M \int_{V} \abs{g_x(\Grad\phi(x), \iota_x\bar w)}^2 \opd w\opd V_g(x) \\  
	& \quad - \tilde c_\rho \int_M \int_{V} 
	\abs{g_x(\Grad\phi(x), \iota_x\bar w)-\partial_{(x, \parametrization_x(\eps w))}\phi}^2\opd w\opd V_g(x) 	
	 \\
	 & \geq \frac{\tilde c_\rho}2 \int_M \absav{\Grad\phi}^2 \opd V_g - C \eps^2L_{\Grad}(\phi)^2
\end{align*}
for a constant $C>0$,
where in the last step we used \Cref{lemma:taylor}.
We analogously obtain 
\begin{equation*}
\!\int_M\! \int_{D^M_\eps(x)}\! \abs{\partial^2_{(x,y)}\phi}^2 \densitymm(x,y)   \opd V_g(y)\opd V_g(x)
	 \!\geq\!  \frac{\tilde{c}_\rho}{2} \int_M \absav{ \Hess\phi}^2 \opd V_g - C \eps^2 L_{\Hess}(\phi)^2.
\end{equation*}
Summing both terms and applying \Cref{prop:norm_equivalence}, one sees that there exists 
$C>0$ such that
\begin{equation}
\|\phi\|^2_{H^2(M,\R^l)} \leq C\left(\energy^\eps[\phi] + C+ \eps^2 \left(L_{\Grad}(\phi) + L_{\Hess}(\phi)\right)^2\right)
 \leq C\left(\energy^\eps[\phi] + C+ C_L^2\right),
\end{equation}
for every $\phi \in \NNSpace$ and $\eps$ small enough (\cf~assumption \ref{H2}).
Obviously, $\energy^\eps[0]$ is a finite upper bound for the energy of the minimizer. Thus,
for any fixed $\eps>0$, there exists a minimizing sequence $\{\phi^\eps_j\}_{j \in \N}$ with $\|\phi^\eps_j\|_{H^2(M,\R^l)} \leq C(\energy^\eps[0]+C_L^2)$ and $\lim_{j \to \infty} \energy^\eps[\phi^\eps_j]=\inf_{\phi \in \NNSpace} \energy^\eps[\phi]$. 
Hence, there exists a weakly convergent subsequence (after relabeling) $\phi^\eps_j \rightharpoonup \phi^\eps$ in $H^2(M,\R^l)$. By weak lower semicontinuity of the norm, we have $\|\phi^{\eps}\|_{H^2(M,\R^l)} \leq C(\energy^\eps[0]+C_L^2)$. Furthermore,
 we have $\phi^\eps_j \to \phi^\eps$ in $\dot L^2(M,\R^l)$ by the compact embedding property, so that \ref{H1} implies $\phi^\eps \in \NNSpace$.
Extracting a further subsequence, we even obtain pointwise almost everywhere convergence, so that
by Fatou's lemma we have $\liminf_{j \to \infty} \energy^\eps[\phi^\eps_j] \geq \energy^\eps[\phi^\eps]=\energy_{\mathcal F}^\eps[\phi^\eps]$, which finishes the proof.
\end{proof}
Uniqueness of a minimizer to \eqref{eqn:restrictedNonlocalFunctional} cannot be expected:
Indeed, the energy $\energy^\eps$ is invariant under composition of its argument from the left with a rigid motion or a reflection. 
Even apart from this invariance, the nonconvexity of the first integrand in \eqref{eq:abstract_functional}, {which is unavoidable when promoting isometries,} may prevent uniqueness.
However, whenever $M$ is intrinsically flat and homeomorphic to the $m$-disc, there is a unique minimizer of $\energy^\eps$ up to rigid motion and reflection
~\cite[Proposition 1]{BrRaRu21}. Being able to find flat embeddings may actually be quite relevant in applications,
as it was noticed in \cite{ShKuFl18} that generative image manifolds have almost no curvature.

\subsection{The limit for vanishing sampling radius}
\label{subsec:limit}
In this section we study convergence of the functional \eqref{eqn:restrictedNonlocalFunctional} as the sampling radius $\eps$ tends to $0$. 
The limit loss functional is local, promotes low distortion and low bending embeddings, and reads
\begin{align}
\energy[\phi] &\coloneqq \int_M \Distortion(\Grad\phi(x))
+\lambda \|\Hess\phi(x)\|^2\opd V_g(x), \label{eq:limitE}
\qquad\text{with }\\
\label{eq:Gamma_choice}
\Distortion(W)&\coloneqq \int_{B_1^{m}(0)} \distortion(|g_x(W,\iota_x\bar w)|)  \rho(x,w) \opd w\,, \\
\label{eq:HessNorm_choice}
\Vert A\Vert^2 &\coloneqq
\int_{B_1^{m}(0)} |g_x(A[\iota_x\bar w],\iota_x\bar w)|^2 \rho(x,w) \opd w  \,,
\end{align}
for all $W\in T_xM$ and $A\in L(T_xM,T_xM)$ and $\rho$ from \ref{D3},
as stated in our main result.
\begin{theorem}[Mosco-convergence]\label{thm:Mosco_smooth} Let \ref{H2}--\ref{H3} be satisfied with $C_L=0$.
Then the nonlocal regularization energies $\{\energy_{\mathcal F}^\eps\}_{\eps >0}$ given by \eqref{eqn:restrictedNonlocalFunctional} converge to the continuous regularization energy $\energy$ given by \eqref{eq:limitE} as $\epsilon \to 0$, 
in the sense of Mosco in $\dot H^2(M,\R^l)$.
\end{theorem}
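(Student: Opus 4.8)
The plan is to establish the two Mosco conditions separately: (i) a liminf inequality along weakly convergent sequences $\phi^\eps\rightharpoonup\phi$ in $\dot H^2(M,\R^l)$, and (ii) existence of a recovery sequence $\phi^\eps\to\phi$ \emph{strongly} in $\dot H^2(M,\R^l)$ with $\energy^\eps_{\mathcal F}[\phi^\eps]\to\energy[\phi]$ for every $\phi\in\dot H^2(M,\R^l)$. The technical backbone for both is the representation \eqref{eq:integral_over_sphere1} together with the pointwise convergence $\rho^\eps(x,w)\to\rho(x,w)$ from \eqref{eq:bound-h}, the uniform bounds \eqref{eqn:weightBound}, and the Taylor estimates of \cref{lemma:taylor}: since $C_L=0$, assumption \ref{H2} forces $\eps(L_{\Grad}(\phi^\eps)+L_{\Hess}(\phi^\eps))\to0$, so the difference quotients $\discGrad{x}{\parametrization_x(\eps w)}\phi^\eps$ and $\discHess{x}{\parametrization_x(\eps w)}\phi^\eps$ differ from $g_x(\Grad\phi^\eps(x),\iota_x\bar w)$ and $g_x(\Hess\phi^\eps(x)[\iota_x\bar w],\iota_x\bar w)$ by a uniformly controlled error tending to $0$.

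For the \textbf{liminf inequality}, take $\phi^\eps\rightharpoonup\phi$ with $\liminf\energy^\eps_{\mathcal F}[\phi^\eps]<\infty$; then $\phi^\eps\in\NNSpace(M,\R^l)$ eventually, and by the coercivity estimate established in the proof of \cref{thm:strong_discrete_existence} (using \ref{H2} with $C_L=0$) the $\phi^\eps$ are bounded in $H^2(M,\R^l)$, consistent with the assumed weak limit; by Rellich $\phi^\eps\to\phi$ strongly in $H^1$ and (a subsequence) a.e. Rewriting $\energy^\eps[\phi^\eps]$ via \eqref{eq:integral_over_sphere1}, I would replace the difference quotients by the exact Riemannian derivatives at cost of an error $O(\eps L)\to0$ (the integrand is controlled by \cref{lemma:taylor} and the Lipschitz constants, and $\distortion$ is Lipschitz on bounded sets with $\distortion(0)<\infty$), arriving at a functional of the form $\int_M\int_{B_1^m(0)}\distortion(|g_x(\Grad\phi^\eps,\iota_x\bar w)|)\rho^\eps+\lambda|g_x(\Hess\phi^\eps[\iota_x\bar w],\iota_x\bar w)|^2\rho^\eps\opd w\opd V_g$. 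Since $\rho^\eps\to\rho$ pointwise and is bounded above/below by \eqref{eqn:weightBound}, and since $W\mapsto\int_{B_1^m}\distortion(|g_x(W,\iota_x\bar w)|)\rho^\eps(x,w)\opd w$ is convex (hence the full integrand is convex in $(\Grad\phi^\eps,\Hess\phi^\eps)$), I can invoke weak lower semicontinuity of convex integral functionals together with the pointwise convergence of the weights (e.g. via a fixed-$\delta$ truncation $\min\{\rho^\eps,\delta^{-1}\}\to\min\{\rho,\delta^{-1}\}$ in $L^\infty$-weak$^*$, or Ioffe's theorem) to conclude $\liminf\energy^\eps[\phi^\eps]\ge\energy[\phi]$, with $\energy[\phi]$ finite so $\phi\in\dot H^2$.

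For the \textbf{recovery sequence}, given $\phi\in\dot H^2(M,\R^l)$ I would use density assumption \ref{H3} to pick $\phi^\eps\in\NNSpace(M,\R^l)$ with $\|\phi^\eps-\phi\|_{H^2}\to0$; this gives the required strong $\dot H^2$-convergence. It remains to show $\energy^\eps[\phi^\eps]\to\energy[\phi]$. One subtlety: \ref{H3} does not by itself control $L_{\Grad}(\phi^\eps),L_{\Hess}(\phi^\eps)$, but \ref{H2} with $C_L=0$ guarantees $\eps(L_{\Grad}(\phi^\eps)+L_{\Hess}(\phi^\eps))\to0$ for \emph{any} choice $\phi^\eps\in\NNSpace$, so the Taylor errors from \cref{lemma:taylor} still vanish; then $\energy^\eps[\phi^\eps]$ equals, up to $o(1)$, the integral of $\distortion(|g_x(\Grad\phi^\eps,\iota_x\bar w)|)\rho^\eps+\lambda|g_x(\Hess\phi^\eps[\iota_x\bar w],\iota_x\bar w)|^2\rho^\eps$, and I pass to the limit using strong $H^2$-convergence of $\phi^\eps$ (so $\Grad\phi^\eps\to\Grad\phi$, $\Hess\phi^\eps\to\Hess\phi$ in $L^2$, a subsequence a.e.), the uniform bound $\rho^\eps\le\tilde C$, pointwise convergence $\rho^\eps\to\rho$, continuity and at-most-quadratic growth of $\distortion$, and dominated convergence to obtain $\energy^\eps[\phi^\eps]\to\energy[\phi]$. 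Combining an upper-bound (recovery) and lower-bound ($\liminf$) inequality, both along strongly resp. weakly convergent sequences, yields Mosco-convergence.

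The main obstacle I anticipate is the simultaneous handling of the \emph{two} limits inside the integral in the liminf step: the integrand depends on $\eps$ both through the difference quotients (handled by \cref{lemma:taylor} and $C_L=0$) and through the weights $\rho^\eps$, while the functions $\phi^\eps$ only converge weakly in $H^2$. Standard lower-semicontinuity theorems assume a fixed integrand, so one must either truncate/approximate $\rho^\eps$ uniformly and take a diagonal limit in the truncation parameter, or appeal to a lower-semicontinuity result allowing varying integrands (e.g. via Young measures or an Ioffe-type argument) — verifying the hypotheses of such a result, given only the pointwise-a.e. convergence of $\rho^\eps$ and the uniform two-sided bound \eqref{eqn:weightBound}, is the delicate point; the recovery-sequence direction is comparatively routine once \ref{H2}--\ref{H3} are combined.
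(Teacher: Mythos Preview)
Your recovery-sequence argument is essentially the paper's, and your overall scaffolding (rewrite via \eqref{eq:integral_over_sphere1}, replace difference quotients by derivatives using \cref{lemma:taylor} and $\eps L\to0$, then pass to the limit in the weights $\rho^\eps$) is correct.

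There is, however, a genuine gap in your liminf argument: the claim that $W\mapsto\int_{B_1^m(0)}\distortion(|g_x(W,\iota_x\bar w)|)\rho^\eps(x,w)\opd w$ is convex is \emph{false}. Since $\distortion$ is not monotone (it decreases on $[0,1]$), the composition $\distortion(|\cdot|)$ is not convex; concretely, for $m=1$ the integrated distortion reduces to a constant times $\distortion(|W|)$ on $(T_xM)^l\cong\R^l$, and on the segment joining $W$ and $-W$ with $|W|=1$ one has value $0$ at the endpoints and $\distortion(0)>0$ at the midpoint. So weak lower semicontinuity via convexity is unavailable for the first-order term, and your anticipated ``main obstacle'' (varying integrand + weak convergence) is in fact not the issue.

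The paper sidesteps this entirely by splitting the two terms. For the distortion term it uses exactly the tool you already noted: Rellich gives $\Grad\phi^\eps\to\Grad\phi$ \emph{strongly} in $L^2$, hence (along a subsequence) pointwise a.e., and combined with $\rho^\eps\to\rho$ pointwise a.e.\ one gets pointwise convergence of the nonnegative integrand, so Fatou yields the liminf directly---no convexity needed. For the quadratic Hessian term, where only weak $L^2$-convergence of $\Hess\phi^\eps$ is available, the paper absorbs the weight as $\sqrt{\rho^\eps}$ into the function: since $\sqrt{\rho^\eps}$ is uniformly bounded and converges pointwise, the product $g_x(\Hess\phi^\eps[\iota_x\bar w],\iota_x\bar w)\sqrt{\rho^\eps(x,w)}$ converges weakly in $L^2(M,\R^l)$ for each fixed $w$, and weak lower semicontinuity of the $L^2$-norm followed by Fatou over $w\in B_1^m(0)$ finishes the job. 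This is more elementary than an Ioffe-type argument and avoids the varying-integrand difficulty you flagged.
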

\begin{remark}
An isometric embedding $\phi:M\to\R^l$ is characterized
by $\Grad\phi(x)$ being orthogonal in any point $x\in M$, or equivalently 
$\vert g_x(\Grad\phi(x),\iota_xv)\vert =1$ for all $v\in S^{m-1}$, where $S^{m-1} \coloneqq\partial B_1^m(0)$.
Therefore, the first term of \eqref{eq:limitE} penalizes deviation from an isometric embedding, as it is zero for locally isometric $\phi:M\to \R^l$ and strictly positive otherwise.
Similarly, extrinsic bending of the embedding $\phi:M\to \R^l$ manifests as a non-vanishing Riemannian Hessian.
Since the term \eqref{eq:HessNorm_choice} defines a squared norm on the space
$L_{\text{sym}}(T_xM,T_xM)^l$ by \Cref{prop:norm_equivalence},
the second term in \eqref{eq:limitE} penalizes extrinsic bending.
\end{remark}
\begin{proof}[Proof of \Cref{thm:Mosco_smooth}]
We have to verify the two properties \cite{Mo69a} defining Mosco-convergence: \\
\setlength{\mylongest}{\widthof{$\limsup$}}
\addtolength{\mylongest}{\labelsep}

\begin{minipage}{0.9\textwidth}
\begin{enumerate}[align=left,labelwidth=\mylongest,leftmargin=!]
\item[$\liminf$:] For every sequence $\{\phi^\epsilon\}_{\eps>0}$ converging weakly to 
$\phi$ in $\dot H^2(M,\R^l)$ as $\eps\to 0$, one has that \mbox{$\liminf\limits_{\epsilon \to 0} \energy_{\mathcal F}^\eps[\phi^\epsilon] \geq \energy[\phi]$}.
\item[$\limsup$:] For every $\phi \in \dot{H}^2(M,\R^l)$, there exists a sequence $\{\phi^\epsilon\}_{\eps>0}$ converging for $\eps\to 0$ strongly to $\phi$ in $\dot H^2(M,\R^l)$  with
$\limsup\limits_{\epsilon \to 0} \energy_{\mathcal F}^\eps[\phi^\epsilon] \leq \energy[\phi]$.
\end{enumerate}  
\end{minipage}\\[0.5em]
For ease of notation,  we will sometimes use an expression of the form $f(x,w)$ to actually indicate the mapping {$M\times B_1^m(0)\ni(x,w)\mapsto f(x,w)$.}
It will always be clear from the context when this is intended.
{For easier reference, we also recall the form of our nonlocal functional
\begin{equation} \label{eq:integral_over_sphere_repeat}
\energy^\eps[\phi]
=\int_M \int_{B_1^{m}(0)} \left(\distortion(|\partial_{(x,\parametrization_x(\eps w))}\phi|) + \lambda |\partial^2_{(x,\parametrization_x(\eps w))} \phi|^2\right)
\rho^\eps(x,w)  \opd w \opd V_g(x).
\end{equation}
}
\medskip
\subsubsection*{Proof of the $\liminf$ property.}
Let $\phi^\eps \rightharpoonup \phi$ in $\dot H^2(M,\R^l)$. Putting aside the trivial case of $\liminf\limits_{\eps \to 0} \energy_{\mathcal F}^\eps[\phi^\eps]=+\infty$, we may assume that $\phi^\eps \in \NNSpace$ for every $\eps>0$ along a subsequence.
We first estimate
\begin{align*}
&\limsup_{\eps \to 0}\| \discGrad{x}{\parametrization_x (\eps w)}\phi^\eps - g_x(\Grad\phi(x),\iota_x\bar w) \|_{L^2(M \times B_1^{m}(0))} \\
&\leq 
\limsup_{\eps \to 0}\|\discGrad{x}{\parametrization_x (\eps w)}\phi^\eps - g_x(\Grad\phi^\eps(x),\iota_x\bar w)  \|_{L^2(M \times B_1^{m}(0))} \\
 &\quad + \limsup_{\eps \to 0}\|g_x(\Grad\phi^\eps(x),\iota_x\bar w) -  g_x(\Grad\phi(x),\iota_x\bar w)\|_{L^2(M \times B_1^{m}(0))}.
\end{align*}
By \eqref{eq:first-order-taylor-r} the first summand can be estimated as
\begin{equation*}
\|  \discGrad{x}{\parametrization_x (\eps w)}\phi^\eps -g_x(\Grad\phi^\eps(x),\iota_x\bar w)  \|_{L^2(M \times B_1^{m}(0))}
\leq C  \eps L_{\Grad}(\phi^\eps),
\end{equation*}
which by \ref{H2} converges to $0$.

To show the same for the second summand, we observe that $\Grad\phi^\eps \to \Grad\phi$ in $L^2(M,(TM)^{l})$ by the compact embedding of $\dot H^2(M,\R^l)$ into $H^1(M,\R^l)$. 
Thus we obtain $g_x(\Grad\phi^\eps(x),\iota_x \bar{w}) \to g_x(\Grad\phi(x),\iota_x \bar{w})$ in $L^2(M \times B^m_1(0),\R^l)$ since the function 
\begin{equation*}
    V \mapsto [(x, w) \mapsto g_x(V(x), \iota_x \bar w)]
  \end{equation*}
is a bounded linear function from $L^2(M,(TM)^l)$ to $L^2(M \times B^m_1(0),\R^l)$.
Hence, exploiting \eqref{eq:bound-h} and passing to a subsequence without relabeling, for almost every $x \in M, w \in B_1^{m}(0)$ we have
\begin{equation*}
 \distortion(|\discGrad{x}{\parametrization_x (\eps w)}\phi^\eps|) {\rho^\eps(x,w)} \to  \distortion(|g_x(\Grad\phi(x),\iota_x\bar w)|) \rho(x,w).
\end{equation*}
Applying Fatou's lemma we obtain
\begin{align}\label{eq:liminf_first_part}
&\liminf_{\eps \to 0}\int_M \int_{B_1^{m}(0)} 
\distortion(|\discGrad{x}{\parametrization_x (\eps w)}\phi^\eps|)  \rho^\eps(x,w) \opd w\opd V_g(x) \\
&\geq\int_M \int_{B_1^{m}(0)} \distortion(|g_x(\Grad \phi(x),\iota_x\bar w)|) \rho(x,w) \opd w\opd V_g(x).\nonumber
\end{align}
To handle the second order term in \eqref{eq:integral_over_sphere_repeat}, we write
\begin{align*}
&\liminf_{\eps \to 0}\|\discHess{x}{\parametrization_x (\eps w)}\phi^\eps \sqrt{\rho^\eps(x,w)}\|_{L^2(M \times B_1^{m}(0))}\\
& \geq  \liminf_{\eps \to 0} \| g_x(\Hess\phi^\eps(x)[\iota_x\bar w],\iota_x\bar w) \sqrt{\rho^\eps(x,w)}\|_{L^2(M \times B_1^{m}(0))} \\
& \quad  - \limsup_{\eps \to 0} \left\|\left(\discHess{x}{\parametrization_x (\eps w)}\phi^\eps - g_x(\Hess\phi^\eps(x)[\iota_x\bar w],\iota_x\bar w)\right)\sqrt{\rho^\eps(x,w)}\right\|_{L^2(M \times B_1^{m}(0))}.
\end{align*}
By \eqref{eq:second-order-taylor-r} and \eqref{eqn:weightBound} the last term can be estimated as
\begin{equation*}\label{eq:R2_vanish}
\left\|\left(\discHess{x}{\parametrization_x (\eps w)}\phi^\eps \!-\! g_x(\Hess\phi^\eps(x)[\iota_x\bar w],\iota_x\bar w)\right) \sqrt{\rho^\eps(x,w)}\right\|_{L^2(M \times B_1^m(0))} \leq C\eps L_{\Hess}(\phi^\eps),
\end{equation*}
which vanishes in the limit by \ref{H2}. 
To bound the first term, we first observe that 
$$g_x(\Hess\phi^\eps(x)[\iota_x \bar{w}],\iota_x \bar{w}) \rightharpoonup g_x(\Hess\phi(x)[\iota_x \bar{w}],\iota_x\bar{w})~~~ \text{in } L^2(M \times B^m_1(0),\R^l),$$
since the function 
\begin{equation*}
    A \mapsto [(x, w) \mapsto g_x(A(x)[\iota_x \bar w], \iota_x \bar w)]
\end{equation*}
is a bounded linear function from $L^2(M,{L}_{\text{sym}}(TM,TM)^l)$ to $L^2(M \times B^m_1(0), \R^l)$ and $\Hess\phi^\eps$ converges weakly to $\Hess\phi$ in the former space.
 
Furthermore, by the uniform boundedness \eqref{eqn:weightBound} and pointwise convergence \eqref{eq:bound-h} of $\rho^\eps$ {, using the dominated convergence theorem,} we get 
\begin{equation}\label{eq:second_order_weak}
 g_x(\Hess\phi^\eps(x)[\iota_x\bar w],\iota_x\bar w) \sqrt{\rho^\eps(x,w)} \rightharpoonup  g_x(\Hess\phi(x)[\iota_x\bar w],\iota_x\bar w) \sqrt{\rho(x,w)}
\end{equation}
in $L^2(M \times B^m_1(0) ,\R^l).$
Using the weak lower semicontinuity of the norm, this implies
\begin{align*}
&\liminf_{\eps \to 0} \| g_x(\Hess\phi^\eps(x)[\iota_x\bar w],\iota_x\bar w) \sqrt{\rho^\eps(x,w)}\|_{L^2(M \times B_1^{m}(0))} \\
&\geq \| g_x(\Hess\phi(x)[\iota_x\bar w],\iota_x\bar w) \sqrt{\rho(x,w)}\|_{L^2(M \times B_1^m(0))}.
\end{align*}
We thus proved
\begin{align*} 
& \liminf_{\eps \to 0}\int_M \int_{B_1^m(0)}  \lambda |\discHess{x}{\parametrization_x (\eps w)}\phi^\eps|^2 \rho^\eps(x,w) \opd w\opd V_g(x)\\
& \geq  \int_M \int_{B_1^m(0)} \lambda |g_x(\Hess\phi[\iota_x\bar w],\iota_x\bar w)|^2 \rho(x,w) \opd w\opd V_g(x).
\end{align*}
Finally, combining this last inequality with \eqref{eq:liminf_first_part}, we get the desired inequality.
\medskip

\subsubsection*{Proof of the $\limsup$ property.}
By the density assumption \ref{H3}, for every $\phi \in \dot{H}^2(M,\R^l)$ there exists a sequence $\{\phi^\eps\}_{\eps >0}$ such that $\phi^\eps \in \NNSpace$ and $\phi^\eps \to \phi$ in $H^2(M,\R^l)$ as $\eps \to 0$.
Then we can repeat the argument from the $\liminf$ property to prove that
\begin{equation*}
\discGrad{x}{\parametrization_x (\eps w)}\phi^\eps \to g_x(\Grad \phi(x),\iota_x\bar w)~ \text{in}~ L^2(M \times B_1^m(0)).
\end{equation*} 
Now, one uses uniform boundedness  \eqref{eqn:weightBound} and 
pointwise convergence \eqref{eq:bound-h} of $\rho^\eps$ and, 
taking into account the splitting of $\gamma(s)$ into $|s|^2$ and the uniformly bounded $\gamma(s) - |s|^2$,
applies the dominated convergence theorem 
to obtain
\begin{align}\label{eq:limsup_first_part}
&\limsup_{\eps \to 0}\int_M \int_{B_1^m(0)}  \distortion(|\discGrad{x}{\parametrization_x (\eps w)}\phi^\eps|) \rho^\eps(x,w) \opd w\opd V_g(x) \\
&=\int_M \int_{B_1^m(0)}\distortion(|g_x(\Grad\phi(x),\iota_x\bar w)|) \rho(x,w) \opd w\opd V_g(x).\nonumber
\end{align}
For the second order term in \eqref{eq:integral_over_sphere_repeat} we write
\begin{align*}
&\limsup_{\eps \to 0}\|\discHess{x}{\parametrization_x (\eps w)}\phi^\eps \sqrt{\rho^\eps(x,w)}\|_{L^2(M \times B_1^m(0))}\\
& \leq  \limsup_{\eps \to 0} \| g_x(\Hess\phi^\eps(x)[\iota_x\bar w],\iota_x\bar w) \sqrt{\rho^\eps(x,w)}\|_{L^2(M \times B_1^m(0))}\\
  &\quad  + \limsup_{\eps \to 0} \|\left(\discHess{x}{\parametrization_x (\eps w)}\phi^\eps - g_x(\Hess\phi^\eps(x)[\iota_x\bar w],\iota_x\bar w)\right)\sqrt{\rho^\eps(x,w)}\|_{L^2(M \times B_1^m(0))},
\end{align*}
where the last term vanishes as in the proof of the $\liminf$ property. For the first term 
we show
\begin{equation*}
 g_x(\Hess\phi^\eps(x)[\iota_x\bar w],\iota_x\bar w) \sqrt{\rho^\eps(x,w)} \to  g_x(\Hess\phi(x)[\iota_x\bar w],\iota_x\bar w) \sqrt{\rho(x,w)}
\end{equation*}
in $L^2(M \times B^m_1(0),\R^l)$,
with an analogous argument to the one used to obtain \eqref{eq:second_order_weak}.
Thus, again using the dominated convergence theorem, we finally get
	\begin{align*} 
		&\limsup_{\eps \to 0}\int_M \int_{B_1^m(0)}  \lambda |\discHess{x}{\parametrization_x (\eps w)}\phi^\eps|^2 {\rho^\eps(x,w)} \opd w\opd V_g(x) \\
		 &\leq\int_M \int_{B_1^m(0)} \lambda |g_x(\Hess\phi(x)[\iota_x\bar w],\iota_x\bar w)|^2 \rho(x,w) \opd w\opd V_g(x),
	\end{align*}
	which together with \eqref{eq:limsup_first_part} proves the inequality {and finishes the proof of the theorem}.
\end{proof}
The existence of a minimizer for \eqref{eq:limitE} can be established using the direct method of the calculus of variations~\cite[Theorem 2]{BrRaRu21}. 
As for $\energy_{\mathcal F}^\eps$, the minimizer (modulo a rigid motion or reflection) is in general not unique due to the isometry promoting term.
Based on the existence and boundedness result from \Cref{thm:strong_discrete_existence} and the Mosco-convergence we can show convergence of minimizers by a standard procedure in $\Gamma$-convergence theory (see e.g.\ \cite{Br02}).
\begin{corollary}\label{thm:convergence_final}
Let \ref{H1}--\ref{H3} hold with $C_L=0$.
Then for every sequence $\{\phi^\eps\}_{\eps>0}$ of minimizers for the energies $\{\mathcal{E}_{\mathcal F}^\eps\}_{\eps>0}$ there exists a subsequence converging weakly in $H^2(M,\R^l)$ to a minimizer $\phi$ for the energy $\mathcal{E}$. Furthermore, the corresponding function values $\{\energy_{\mathcal F}^\eps[\phi^\eps]\}_{\eps>0}$ converge to $\mathcal{E}[\phi]$.
\end{corollary}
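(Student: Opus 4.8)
The plan is to follow the standard argument connecting $\Gamma$-convergence (here Mosco-convergence) together with equi-coercivity to convergence of minimizers. I would structure the proof around three ingredients already established in the excerpt: the uniform $H^2$-bound on minimizers $\phi^\eps$ from \cref{thm:strong_discrete_existence}, the Mosco-convergence $\energy_{\mathcal F}^\eps\to\energy$ from \cref{thm:Mosco_smooth}, and the existence of a minimizer of $\energy$ (cited from \cite{BrRaRu21}).

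First I would invoke \cref{thm:strong_discrete_existence}: since $C_L=0$ in particular implies the hypothesis of that theorem, each $\energy_{\mathcal F}^\eps$ has a minimizer $\phi^\eps$, and $\|\phi^\eps\|_{H^2(M,\R^l)}\le C$ uniformly in $\eps$. By reflexivity of $H^2(M,\R^l)$ (or rather of $\dot H^2(M,\R^l)$, recalling all functions in $\NNSpace$ have zero mean), there is a subsequence, not relabeled, with $\phi^\eps\rightharpoonup\phi$ weakly in $\dot H^2(M,\R^l)$ for some $\phi\in\dot H^2(M,\R^l)$.

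Next I would use the $\liminf$ part of Mosco-convergence: along this subsequence, $\energy[\phi]\le\liminf_{\eps\to0}\energy_{\mathcal F}^\eps[\phi^\eps]$. To upgrade this, I would compare with an arbitrary competitor. Fix any $\tilde\phi\in\dot H^2(M,\R^l)$; by the $\limsup$ part there is a recovery sequence $\tilde\phi^\eps\to\tilde\phi$ strongly in $\dot H^2$ with $\limsup_{\eps\to0}\energy_{\mathcal F}^\eps[\tilde\phi^\eps]\le\energy[\tilde\phi]$. Since $\phi^\eps$ minimizes $\energy_{\mathcal F}^\eps$, we have $\energy_{\mathcal F}^\eps[\phi^\eps]\le\energy_{\mathcal F}^\eps[\tilde\phi^\eps]$ for every $\eps$, hence
\begin{equation*}
\energy[\phi]\le\liminf_{\eps\to0}\energy_{\mathcal F}^\eps[\phi^\eps]\le\limsup_{\eps\to0}\energy_{\mathcal F}^\eps[\tilde\phi^\eps]\le\energy[\tilde\phi].
\end{equation*}
Since $\tilde\phi$ was arbitrary in $\dot H^2(M,\R^l)$ (and $\energy$ restricted to this space agrees with the limit, while outside it is $+\infty$ in the relevant comparison—or one simply notes $\energy$ attains its infimum on $\dot H^2$ by \cite[Theorem 2]{BrRaRu21}), $\phi$ is a minimizer of $\energy$. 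Finally, applying the above chain with the specific choice $\tilde\phi=\phi$ and its recovery sequence forces $\liminf_{\eps\to0}\energy_{\mathcal F}^\eps[\phi^\eps]=\energy[\phi]$; combining with the reverse $\limsup$ inequality obtained from testing against $\phi$'s recovery sequence shows $\lim_{\eps\to0}\energy_{\mathcal F}^\eps[\phi^\eps]=\energy[\phi]$, which is the claimed convergence of energy values.

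I do not expect a genuine obstacle here — this is the textbook fundamental theorem of $\Gamma$-convergence, and all nontrivial work (equi-coercivity, the two Mosco inequalities, existence for the limit) has already been done. The only points requiring a little care are bookkeeping ones: that the uniform bound lives in the right (zero-mean) space so that weak compactness applies; that passing to subsequences is harmless because every subsequence of minimizers has a further subsequence converging to \emph{some} minimizer; and that the $\limsup$-recovery sequences are admissible competitors for comparison with the minimizers $\phi^\eps$. If one wanted the full sequence (not just a subsequence) to converge, one would additionally need uniqueness of the limit minimizer, which—as remarked after \cref{thm:strong_discrete_existence} and again before this corollary—fails in general due to the isometry-promoting term; hence the statement is correctly phrased for subsequences only.
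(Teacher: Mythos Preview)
Your proposal is correct and matches the paper's approach exactly: the paper does not spell out a proof but simply states that convergence of minimizers follows ``by the standard procedure'' from the uniform $H^2$-bound of \cref{thm:strong_discrete_existence} together with the Mosco-convergence of \cref{thm:Mosco_smooth}, citing \cite{Br02}. What you have written is precisely that standard procedure, with the correct bookkeeping remarks about zero-mean functions and the necessity of passing to subsequences.
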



\section{Numerical experiments}
\label{sec:experiments}
As a testbed, we purposely use image manifolds where the manifold is explicitly known and where $\av_M$ and $d_M$ can be explicitly computed. 
Note that for various more general shape manifolds with application in imaging, algorithmic tools to compute suitable approximations of $\av_M$ and $d_M$ are available. Examples of such shape spaces are 
LDDMM \cite{Yo10}, metamorphosis \cite{TrYo05a}, or optimal transport \cite{CuPe19}.
For two reasons, using these tools for training the encoder is beyond the scope of this article: First, these methods are still computationally demanding, such that it would be quite involved to even generate a moderately large set of training data.
To overcome this difficulty one could replace the local distance computation and the local Riemannian average by computationally more efficient approximations. This would require a careful analysis of the approximation error.
Second, while the input data may lie on a low dimensional hidden manifold, the methods will in general not compute intrinsic distances and averages within this manifold. Hence, our analysis would have to be adapted to such perturbed, non exact training data.

\subsection{Different input manifolds} \label{subsec:setup}
In the same spirit as \cite{DoGr05}, we consider image data that implicitly represent four different manifolds:
\smallskip

\labeltext{({\bf S})}{dataset:s} \textbf{S}undial Shadows ({\bf S}). This dataset has the upper hemisphere  $S^2 \cap \{x_3\geq0\}$ as its underlying hidden manifold, corresponding to possible positions of a lightsource. Inspired by \cite{OrYaHe20}, we generate images by casting a shadow of a vertical rod on a plane from all these positions. These images form our input manifold $M$ with metric induced from $S^2$, so that the distance on $M$ is the geodesic distance on $S^2$, ${d_M}(x,y)=\arccos(x^Ty)$ for $x,y \in S^2$. Contrary to \cite{OrYaHe20}, we do not render these images with a 3D engine, but 
simply approximate the shadows by Gaussians
(\cf~\cref{fig:masterpiece}): 
A point $x\in S^2$ 
is first mapped onto the plane by mapping it onto the point $x_p \in \R^2$ defined as the intersection of the plane with the line through $x$ and the rod tip.
We then use a Gaussian function centered at $x_p/2$ 
with variance $\abs {x_p}^2$ in direction $x_p$ and a fixed small variance in the orthogonal direction.  
 We use a resolution of $64\times 64$ pixels and sampling strategy \ref{S3}.
 
\labeltext{({\bf R})}{dataset:r}\textbf{R}otating Object ({\bf R}). This dataset has the group of rotations $SO(3)$ as hidden manifold. 
We generate images forming the manifold $M$ 
by projecting an arbitrarily rotated three-dimensional object (here a toy cow model\footnote{https://www.cs.cmu.edu/~kmcrane/Projects/ModelRepository/})
to the viewing plane. 
We use \texttt{Pytorch3D} \cite{ravi2020pytorch3d} to render the images for the training.
We use the induced distance from $SO(3)$ equipped with the metric $g_x(v,w)=\frac 1 2 \tr(v^Tw)$ for $x \in SO(3)$, $v,w \in T_xSO(3)$ as distance on $M$. This geodesic distance can be computed for example via quaternions as ${d_M}(x, y)=2\arccos(\abs{ q(x) \cdot q(y)})$ \cite{Hu09}, for $x,y \in SO(3)$ and $q(x),q(y)$ their representation as quaternions. We consider an image resolution of $128\times 128$ pixels and sampling strategy \ref{S2}, which can also be implemented via the quaternion representation.

\labeltext{({\bf A})}{dataset:a}\textbf{A}rc rotating around two specified axes ({\bf A}). This dataset has the Klein bottle $[0,1]^2 /\!\sim$ as the hidden manifold, 
where $\sim$ represents the glueing operation on the boundary of $[0,1]^2$ defining the Klein bottle, \ie\ we identify $(x_1,0)$ and $(x_1,1)$ as well as $(0,x_2)$ and $(1,1-x_2)$. To generate the image representation of $(x_1,x_2)\in[0,1]^2/\sim$ as a point in the manifold $M$, we use Blender \footnote{https://www.blender.org/} to render 
a bent, capped tube representing the arc $[0,1]\ni t \mapsto (\cos(t\pi), \sin(t\pi), 0)$,
rotated by the two angles $\alpha=\pi x_1$ and $\beta = \pi(2x_2 + \frac 1 2)$ (\cf~\cref{fig:arcs-summary}, top row).
The tube is colored using a transition from purple inside to yellow outside. 
We use the flat Euclidean geometry, sampling strategy \ref{S3} on $[0,1]^2/\sim$ and an image resolution of $128\times 128$ pixels. 


\labeltext{({\bf E})}{dataset:e}\textbf{E}llipses (\textbf{E}). We consider rotations and translations of an ellipse of fixed aspect ratio $a>0$ and scale $s>0$ given by the implicit equation $h_a(z/s)\leq 0$ with $h_a(z) = z_1^2 + (az_2)^2 -1$ for $z \in \R^2$. We define a rotation about $\theta \in [0, 2\pi)$ by $R(\theta)=\begin{pmatrix}
  \cos\theta & -\sin\theta \\
  \sin \theta & \cos\theta
\end{pmatrix}$. For $\theta \in [0, 2\pi)$ and a translation $z_c \in [-1,1]^2$, we then consider the ellipse
\begin{equation}
  E(\theta, z_c) = \left\{z \in \R^2\,:\,f_{\theta, z_c}(z)\coloneqq h_a\left(\tfrac 1 s R\left(\tfrac \theta 2 \right)(z-z_c)\right) \leq 0 \right\}.
\end{equation}
The underlying hidden manifold of these ellipses is $[0, 2\pi)/\!\sim \times [-1, 1]^2 \cong S^1 \times [-1,1]^2$, where $\sim$ denotes identification of $0$ and $2\pi$. The corresponding squared distance between $x=(\theta_x, z_{c,x})$ and $y=(\theta_y, z_{c,y})$ is
\begin{equation*}
{d_M}(x,y)^2 = \min(\abs{\theta_x-\theta_y}, \abs{\theta_x+2\pi-\theta_y}, \abs{\theta_x-2\pi-\theta_y})^2 + \abs{z_{c,x}-z_{c,y}}^2.
\end{equation*}
The ellipses are discretized as $64\times 64$ images by evaluating either the characteristic function of $E(\theta, x_c)$ or the smoothed variant $$\hat f_{\theta, x_c; k}(z) =(1+\exp(kf_{\theta, x_c}))^{-1}$$ on a grid on $[-1,1]^2$. We use a modified version of sampling strategy \ref{S1}, where we sample on the disc with radius $\frac{1}{\sqrt 3}\eps$ taken with respect to the maximum metric.

\subsection{Autoencoder architecture and training procedure}\label{subsec:autoencoder}
The used architecture is a deep convolutional neural network as in \cite{BeRaRoGo18}, however, we use larger input images and a smaller latent space dimension. Both encoder and decoder consist of blocks of two convolutions, each followed by a nonlinearity, and a subsequent average pooling (encoder) or upsampling (decoder). 
We determine the number of blocks such that the final output image of the encoder consists of $4\times 4$ pixels, resulting in a latent code of size $16$. The full architectures are shown in \cref{table:architecture}. We use Kaiming initialization \cite{HeZhReSu15}, \ie, all convolutional weights are initialized as zero-mean Gaussian random variables with standard deviation $\sqrt{2} / \sqrt{\texttt{fan\char`_in}(1+0.01^2)}$ for \texttt{fan\char`_in} the number of input channels of the layer times the number of entries in the filter, and all biases are initialized with zeros.
For training, we use the Adam optimizer \cite{KiBa15} with learning rate $10^{-4}$ and default values $\beta_1=0.9, \beta_2=0.999$ and $\varepsilon=10^{-8}$. We further employ a so-called weight decay 
(an $L^2$-penalty on the weights) of $10^{-5}$.
In \cite{BrRaRu21}, we use the non-differentiable leaky ReLU activation function $\lrelu_\alpha(x)=\max\{x,\alpha x\}$. To satisfy the assumption \ref{H2}, one may replace this activation function with a smooth approximation $\lrelu_{\alpha, \beta}(x)=\frac 1 \beta \log(\exp(\beta x)+\exp(\alpha \beta x))$, which converges to $\lrelu_\alpha$ for $\beta \to \infty$. Since the leaky ReLU is much faster to compute and we did not observe any significant effect of the smoothing in practice, we keep the leaky ReLU activation function. 
Networks with (leaky) ReLU nonlinearity also provide nice approximation properties with provable growth rate of the network architecture, though only in Sobolev spaces of order $0\leq s \leq 1$~\cite{GuKuPe20}.
Furthermore, in view of \ref{H3}, the architecture (number of layers and weights, bound on weight growth or penalty on weights) should depend on the sampling radius $\eps$.
However, since our experiments are performed with $\eps$ in a fixed range, we always keep the same architecture.

The training data consists of triplets of images together with a distance value (\cf\ \cref{fig:masterpiece} top): 
\begin{equation*}
  (x,y,\av_{M}(x,y), d_M(x,y)) \text{ with } x,y\in\sample\subset M \times M.
\end{equation*}
This input allows to compute the ingredients  $\discGrad{x}{y}\phi$  and $\discHess{x}{y}\phi$ of the discrete sampling loss functional $E^{\sample}[\phi]$ defined in 
\cref{sec:discreteloss}.  
Even though we generate random samples on the fly, we simulate epochs of size $10000$, split into batches of size $128$, to make the training graphs look more familiar and less noisy.

Our method allows us to train the encoder map separately from the decoder by minimizing 
$E^{\sample}[\phi_\theta]$ to obtain some close to optimal $\phi_{\theta^*}$. Subsequently, if required, one can fix $\theta^*$ and train the decoder by minimizing the \emph{reconstruction loss}
\begin{equation}
R[\psi_\xi]=R[\phi_{\theta^*},\psi_\xi]=\frac1{2|\sample|}\sum_{(x,y)\in\sample}\|\psi_\xi(\phi_{\theta^*}(x))-x\|_{L^2}^2+\|\psi_\xi(\phi_{\theta^*}(y))-y\|_{L^2}^2,\label{eq:rec-loss}
\end{equation}
where for a pixel image $u\in\R^{N\times N\times C}$ with $N\times N$ pixels and $C$ color channels we define the discrete $L^2$-norm as
$\|u\|_{L^2}^2\coloneqq\frac{1}{N^2C}\sum_{i,j=1}^N\sum_{c=1}^Cu_{ijc}^2$.
Alternatively, encoder and decoder can be trained simultaneously by minimizing 
the loss functional
\begin{equation*}
E^{\sample}[\phi_\theta]+\kappa R[\phi_\theta,\psi_\xi]
\end{equation*}
for $\phi_\theta$ and $\psi_\xi$ 
and some $\kappa>0$. 
From now on, we omit the parameters $\theta$ and $\xi$ again, and write $\phi, \psi$ for encoder and decoder.
For better numerical stability in the computation of the finite differences $\discGrad{x}{y}\phi$ and $\discHess{x}{y}\phi$, we rejected pairs with distance below a certain threshold. 
In all our experiments, the isometry, flatness and reconstruction parts of the loss function decrease continuously and monotonically (up to the usual stochastic variations) during training, as shown in \cref{fig:training} for the dataset \ref{dataset:r}.

\begin{figure}[htb]
\begin{tikzpicture}

\pgfplotsset{compat=1.3}

\pgfplotsset{
  short legend/.style={%
    legend image code/.code={
      \draw[##1,line width=0.6pt]
        plot coordinates {
          (0cm,0cm)
          (0.3cm,0cm)
          };%
        }
    }
}

\begin{axis}
[
name=plot1,
width=5.2cm, 
height=3.2cm,
axis background/.style={fill=color0},
axis line style={white},
x grid style={white},
xmajorgrids,
xtick style={draw=none},
xtick = {0, 50, 100, 150, 200, 250, 300, 350},
xticklabels = {0, , , , , , , 350},
xmin=-20,
xmax=360,
xtick style={color=white!15!black},
y grid style={white},
ymajorgrids,
ytick style={draw=none},
ymode = log,
ytick style={color=white!15!black},
x label style={at={(axis description cs:0.5,-0.1)},anchor=north},
xlabel=steps,
ylabel=isometry,
]
\scriptsize
\addplot [semithick, cb-vividorange]
       table[x=Step,y=Value,col sep=comma] {images_cows_e0.25_f10_tangential_run-grad.csv}; 
\addplot [semithick, cb-darkblue]
       table[x=Step,y=Value,col sep=comma] {images_cows_e0.25_f0_tangential_run-grad.csv}; 
\pgfplotsset{scaled x ticks=false}
\end{axis}
\hfill
\begin{axis}
[
name=plot2,
at=(plot1.right of south east), anchor=left of south west,
xshift=0.5cm,
width=5.2cm, 
height=3.2cm,
axis background/.style={fill=color0},
axis line style={white},
x grid style={white},
xmajorgrids,
xtick style={draw=none},
xtick = {0, 50, 100, 150, 200, 250, 300, 350},
xticklabels = {0, , , , , , , 350},
xmin=-20,
xmax=360,
xtick style={color=white!15!black},
y grid style={white},
ymajorgrids,
ytick style={draw=none},
ymode = log,
ytick style={color=white!15!black},
x label style={at={(axis description cs:0.5,-0.1)},anchor=north},
xlabel=steps,
ylabel=bending,
]
\scriptsize
\addplot [semithick, cb-vividorange]
       table[x=Step,y=Value,col sep=comma] {images_cows_e0.25_f10_tangential_run-hess.csv}; 
\addplot [semithick, cb-darkblue]
       table[x=Step,y=Value,col sep=comma] {images_cows_e0.25_f0_tangential_run-hess.csv}; 
\pgfplotsset{scaled x ticks=false}
\end{axis}
\hfill
\begin{axis}
[
at=(plot2.right of south east), anchor=left of south west,
xshift=0.5cm,
width=5.2cm, 
height=3.2cm,
axis background/.style={fill=color0},
legend style={short legend},
axis line style={white},
legend cell align={left},
legend style={
  fill opacity=0.6,
  draw opacity=1,
  text opacity=1,
  at={(0.98,0.98)},
  anchor=north east,
  draw=white!80!black,
  fill=color0
},
x grid style={white},
xmajorgrids,
xtick style={draw=none},
xtick = {0, 250, 500, 750, 1000, 1250, 1500},
xticklabels = {0, , , , , , 1500},
xmin=-20,
xmax=1550,
xtick style={color=white!15!black},
y grid style={white},
ymajorgrids,
ytick style={draw=none},
ymin=0, ymax=0.012,
ytick = {0.002, 0.003, 0.004, 0.005, 0.006, 0.007, 0.008, 0.009, 0.01},
yticklabels = {0.002, , , 0.005, , , , , 0.01},
ymode = log,
ytick style={color=white!15!black},
x label style={at={(axis description cs:0.5,-0.1)},anchor=north},
xlabel=steps,
ylabel=reconstruction,
]
\scriptsize
\addplot [semithick, cb-darkblue]
       table[x=Step,y=Value,col sep=comma] {images_cows_e0.25_f0_tangential_run-rec.csv}; 
\addlegendentry{$\lambda=0$}
\addplot [semithick, cb-vividorange]
       table[x=Step,y=Value,col sep=comma] {images_cows_e0.25_f10_tangential_run-rec.csv}; 
\addlegendentry{$\lambda=10$}
\pgfplotsset{scaled x ticks=false}
\end{axis}
\end{tikzpicture}
\vspace*{-5mm}
\caption{
Comparison of temporal evolution of the three loss components for dataset \ref{dataset:r}  for $\lambda=10$ and $\lambda=0$ (logarithmic $y$-axis).
Per optimization step, 10000 images are processed in batches of 128.}
\label{fig:training}
\end{figure}

\begin{table}
{\small
\begin{tabular}[t]{@{}lll@{}}
type               & parameters & output size  \\ \midrule
Conv2d             & $ 1 / 1 / 1 / 16$    & $66 \times  66 \times  16$ \\ \midrule
Conv2d + LeakyReLU & $ 3 / 1 / 1 / 16$    & $66 \times  66 \times  16$ \\
Conv2d + LeakyReLU & $ 3 / 1 / 1 / 16$    & $66\times 66\times 16$     \\
AvgPool2d          & $ 2/2$               & $33\times 33\times 16$     \\ \midrule
Conv2d + LeakyReLU & $3 / 1 / 1 / 32 $    & $33 \times  33 \times  32$ \\
Conv2d + LeakyReLU & $3 / 1 / 1 / 32 $    & $33 \times  33 \times  32$ \\
AvgPool2d          & $ 2/2$               & $16\times 16\times 32$     \\ \midrule
Conv2d + LeakyReLU & $3 / 1 / 1 / 64 $    & $16\times 16\times 64$     \\
Conv2d + LeakyReLU &$3 / 1 / 1 / 64 $     & $16\times 16\times 64$     \\
AvgPool2d          & $ 2/2$               &$ 8\times 8\times 64$       \\ \midrule
Conv2d + LeakyReLU & $3 / 1 / 1 / 128 $   & $8\times 8\times 128$      \\
Conv2d + LeakyReLU & $3 / 1 / 1 / 128 $   & $8\times 8\times 128$      \\
AvgPool2d          & $ 2/2$               & $4\times 4\times 128$      \\ \midrule
Conv2d + LeakyReLU & $3 / 1 / 1 / 256 $   & $4\times 4\times 256$      \\
Conv2d             & $3 / 1 / 1 / 1 $     & $4\times 4\times 1$      
\end{tabular} \hspace{1cm}
\begin{tabular}[t]{@{}lll@{}}
type               & parameters & output size  \\ \midrule
Conv2d + LeakyReLU & $ 3 / 1 / 1 / 128$   & $4 \times  4 \times  128$ \\
Conv2d + LeakyReLU & $ 3 / 1 / 1 / 128$   & $4\times 4\times 128$     \\
Upsample          & $ 2$ / nearest         & $8\times 8\times 128$     \\ \midrule
Conv2d + LeakyReLU & $3 / 1 / 1 / 64 $    & $8 \times  8 \times  64$ \\
Conv2d + LeakyReLU & $3 / 1 / 1 / 64 $    & $8 \times  8 \times  64$ \\
Upsample          & $ 2$ / nearest         & $16\times 16\times 64$     \\ \midrule
Conv2d + LeakyReLU & $3 / 1 / 1 / 32 $    & $16\times 16\times 32$     \\
Conv2d + LeakyReLU &$3 / 1 / 1 / 32 $     & $16\times 16\times 32$     \\
Upsample          & $ 2$ / nearest         &$ 32\times 32\times 32$       \\ \midrule
Conv2d + LeakyReLU & $3 / 1 / 1 / 16 $    & $32\times 32\times 16$      \\
Conv2d + LeakyReLU & $3 / 1 / 1 / 16 $    & $32\times 32\times 16$      \\
Upsample          & $ 2$ / nearest         & $64\times 64\times 16$      \\ \midrule
Conv2d + LeakyReLU & $3 / 1 / 1 / 16 $    & $64\times 64\times 16$      \\
Conv2d             & $3 / 1 / 1 / 1 $     & $64\times 64\times 1$      
\end{tabular}
\vspace{0.4cm}}
\caption{\label{table:architecture}Encoder (left) and decoder (right) architectures for $64\times 64$ grayscale images. Parameters for convolutional layers (Conv2d) are filter size / stride / padding / output channels, where filter size, stride and padding are in each direction. Parameters for average pooling layers (AvgPool2d) are filter size and stride, again in each direction. Parameters for upsampling layers (Upsample) are scale and mode. For the LeakyReLU activation, a negative slope of $\alpha=0.01$ was used. Output sizes are given with channels in the last dimension. The architecture for $128 \times 128$ $3$-channel color images is similar, except that there is one more block, thus the  maximum number of output channels of the encoder is $512$.}
\end{table}

\subsection{Visualization of the embeddings}\label{subsec:visualization}
To evaluate the smoothness of our embedding function, we visualize the embedded manifold $\phi(M)$ by performing a  principal component analysis 
\begin{figure}
\centering
\input{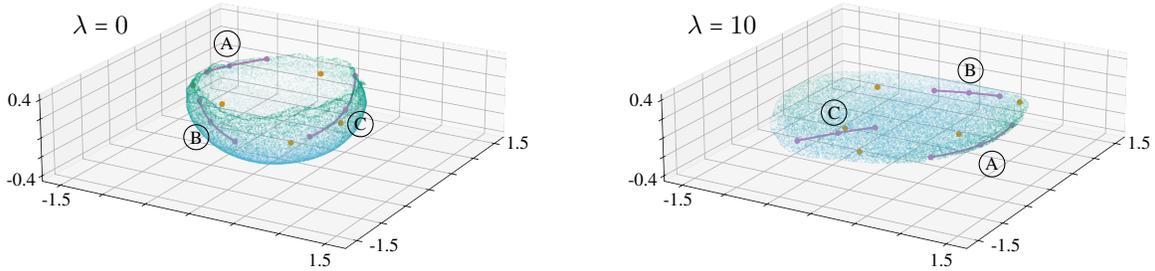}
\caption{\label{fig:sundials-lambda}
Comparison of the latent manifold $\phi(M) \subset \R^{16}$ for sundial dataset \ref{dataset:s} for $\lambda=0$ and $\lambda=10$ and $\eps=\pi/8$ (colored points as in \cref{fig:masterpiece}). The same training procedure as in \cref{fig:masterpiece} was used. The three visualized components explain $97.8\%$ $(\lambda = 0)$ and $99.95\%$ $(\lambda = 10)$ of the variance. A threshold of $99\%$ variance is reached for $6$ $(\lambda=0$) and $2$ $(\lambda=10)$ components.}
\end{figure}
(PCA) in latent space. We then visualize projections onto three-dimensional subspaces 
spanned by three selected principal directions of the PCA.
In all cases we observe smooth embeddings that neatly reproduce the geometry and topology of the hidden manifold $M$ (see \cref{fig:masterpiece,fig:sundials-lambda,fig:arcs-summary,fig:cows-qualitative,fig:quantization}). 
Note that self-intersections in the three-dimensional projection do not necessarily reflect self-intersections in the full latent spaces (see \cref{fig:arcs-distances,fig:arcs-summary}).
For dataset \ref{dataset:s}, our approach and its result are summarized in \cref{fig:masterpiece} for $\lambda=0$. The geometry of the hemisphere is reproduced. In comparison, \cref{fig:sundials-lambda} shows the result for a higher bending penalty $\lambda=10$, resulting in a much flatter embedding of the hemisphere (\ie, the variance in the third component is significantly smaller). 

\begin{figure}
\scalebox{0.9}{
\input{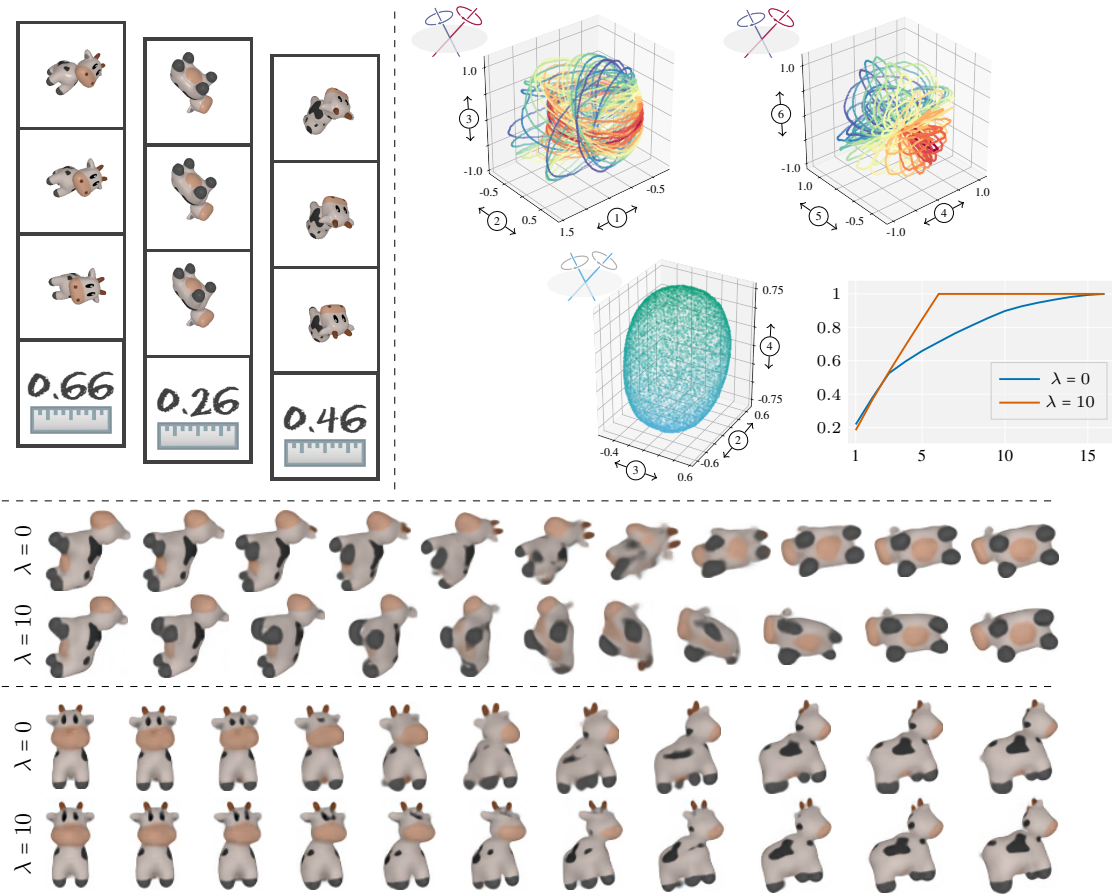}}
\caption{\label{fig:cows-qualitative} Visualization of the results for dataset \ref{dataset:r} for $\lambda=10$ and $\eps=\frac{\pi}{4}$ (with maximal distance being $\pi$). Encoder and decoder were trained separately. Training of the encoder was stopped after the value of the training loss did not decrease for 20 epochs. The decoder was trained until the reconstruction error $R$ evaluated on a test set reached a threshold of $2\cdot 10^{-3}$. Pairs with distance below $\frac{1}{20}$ of the maximal distance were rejected. The embedding is smooth, revealing the topology and geometry of the manifold $SO(3)$, and reasonable interpolations are obtained for $\lambda=10$. On the left, three input triples and their distances  are shown. To visualize the latent manifold, a PCA was performed on the point cloud in 16-dimensional latent space obtained by applying the embedding map $\phi$ to a large number of images from $M\cong SO(3)$. For the bottom right image we fixed a rotation angle and randomly sampled the rotation axis from $S^2$. For both other visualizations we regularly sample rotation axes from different latitudes of $S^2$ and then show points corresponding to rotations around each axis in the same color. The graph coordinates correspond to the principal components indicated by their labels. In components 2, 3, 4, a sphere-like structure can be observed, suggesting that these three components encode the rotation axis. The graph on the bottom right shows the the total amount of explained variance as a function of increasing subspace dimension, with a threshold of 99\% reached for 6 dimensions for $\lambda=10$. Below the dashed line we show examples of interpolations generated by interpolating linearly in latent space and subsequent decoding. For vanishing bending regularization $\lambda=0$ those interpolations are unreliable, while they look very reasonable for $\lambda=10$, even though the decoder was neither trained on linear interpolations in latent space nor was it regularized. }
\end{figure}

\begin{figure}
\begin{tikzpicture}

\pgfplotsset{
  short legend/.style={%
    legend image code/.code={
      \draw[##1,line width=0.6pt]
        plot coordinates {
          (0cm,0cm)
          (0.3cm,0cm)
          };%
        }
    }
}

\begin{axis}
[
width=6cm, 
height=3.5cm,
axis background/.style={fill=color0},
axis line style={white},
legend style={short legend},
x grid style={white},
xmajorgrids,
xtick style={draw=none},
legend style={
  fill opacity=0.6,
  draw opacity=1,
  text opacity=1,
  draw=white!80!black,
  fill=color0,
  at={(0.98,0.5)},
  anchor=north east,
},
xtick = {1, 5, 10, 15},
xticklabels = {1, 5, 10 ,15},
xtick style={color=white!15!black},
y grid style={white},
ymajorgrids,
ytick style={draw=none},
yticklabel style={/pgf/number format/.cd,fixed,precision=3},
ytick style={color=white!15!black},
]
\scriptsize
\addplot [thick, cb-darkblue]
       table[x expr = \thisrow{c}+1, y=var,col sep=comma] {images_klein_bottle_e0.25_f0_rejection_exp_var_sum.csv}; 
\addlegendentry{$\lambda=0$};
\addplot [thick, cb-strongorange]
       table[x expr = \thisrow{c}+1, y=var,col sep=comma] {images_klein_bottle_e0.25_f1_rejection_exp_var_sum.csv}; 
\addlegendentry{$\lambda=1$};
\pgfplotsset{scaled x ticks=false}
\end{axis}
\end{tikzpicture} \hfill
\begin{tikzpicture}
\begin{axis}
[
width=5.5cm, 
axis background/.style={fill=color0},
axis line style={white},
legend style={
  fill opacity=0.6,
  draw opacity=1,
  text opacity=1,
  draw=white!80!black,
  fill=color0,
},
legend pos=north west,
x grid style={white},
xmajorgrids,
xtick style={draw=none},
axis equal image,
xtick = {0, 0.1, 0.2, 0.3, 0.4, 0.5, 0.6,0.7},
xticklabels = {0, , 0.2, , 0.4, , 0.6,},
xtick style={color=white!15!black},
y grid style={white},
ymajorgrids,
ytick={0, 0.1, 0.2, 0.3, 0.4},
yticklabels={0, , 0.2,,0.4},
ytick style={draw=none},
ytick style={color=white!15!black},
]
\addlegendimage{empty legend}
\addlegendentry{$\lambda=0$};
\scriptsize
\addplot+[mark=*, mark options={color=cb-darkblue, scale=0.1pt}, only marks] table[x=distances_true, y=distances_latent, col sep=comma] {images_klein_bottle_e0.25_f0_rejection_scatter_distances.csv};
\pgfplotsset{scaled x ticks=false}
\end{axis}
\end{tikzpicture}
\hfill
\begin{tikzpicture}
\begin{axis}
[
width=5.5cm, 
axis background/.style={fill=color0},
axis line style={white},
x grid style={white},
xmajorgrids,
xtick style={draw=none},
legend style={
  fill opacity=0.6,
  draw opacity=1,
  text opacity=1,
  draw=white!80!black,
  fill=color0,
},
legend pos=north west,
axis equal image,
xtick = {0, 0.1, 0.2, 0.3, 0.4, 0.5, 0.6,0.7},
xticklabels = {0, , 0.2, , 0.4, , 0.6,},
xtick style={color=white!15!black},
y grid style={white},
ymajorgrids,
ytick={0, 0.1, 0.2, 0.3, 0.4},
yticklabels={0, , 0.2,,0.4},
ytick style={draw=none},
ytick style={color=white!15!black},
]
\scriptsize
\addlegendimage{empty legend}
\addlegendentry{$\lambda=1$};
\addplot+[mark=*, mark options={color=cb-strongorange, scale=0.1pt}, only marks] table[x=distances_true, y=distances_latent, col sep=comma] {images_klein_bottle_e0.25_f1_rejection_scatter_distances.csv}; 
\pgfplotsset{scaled x ticks=false}
\end{axis}
\end{tikzpicture}
\caption{\label{fig:arcs-distances}  Comparison of two experiments for the dataset \ref{dataset:a} for $\lambda=0$ and $\lambda=1$ and $\eps=\frac{1}{4\sqrt 2}$ (with maximal distance being $\frac{1}{\sqrt 2}$). Encoder and decoder were trained at the same time with $\kappa=0.1$ and training was stopped once the reconstruction error $R$ evaluated on a test set reached a threshold of $2\cdot 10^{-3}$. Pairs with distance below $\frac{1}{20}$ of the maximal distance were rejected. 
The left graph shows the total amount of explained variance as a function of increasing subspace dimension, the total dimension of the latent space being 16. The threshold of $99\%$ explained variance was reached for 5 and 11 dimensions for $\lambda=1$ and $\lambda=0$, respectively. 
The right graphs show the extrinsic latent space distance $|\phi(x)-\phi(y)|$ versus the intrinsic manifold distance $d_M(x,y)$ for random point pairs $(x,y)\in M$. }
\end{figure}

\begin{figure}
\input{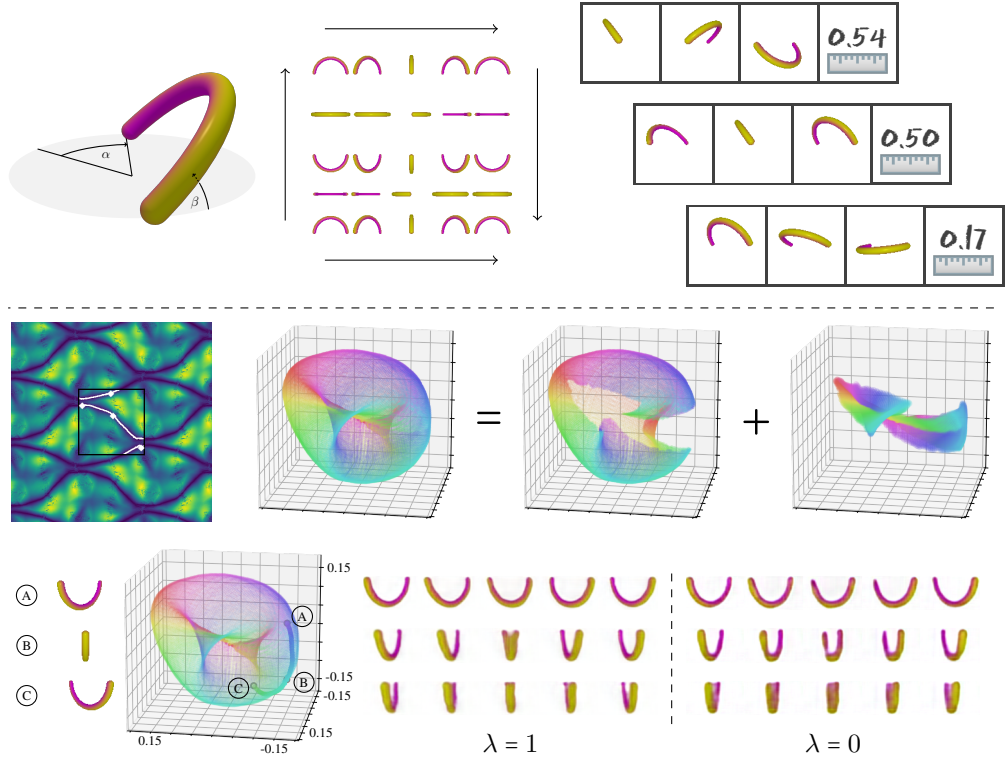}
\caption{
Visualization of the results for dataset \ref{dataset:a} for $\lambda=1$ (unless indicated otherwise). The same experiments as in \cref{fig:arcs-distances} were used.
The first row illustrates how the data was generated: The first image shows how an arc is rotated by angles $\alpha$ and $\beta$. The second image shows variation of $\alpha$ in horizontal and variation of $\beta$ in vertical direction, making visible the mirroring leading to the topology of the Klein bottle. 
On the right, three input triples and their distances are shown. The second row explains the latent manifold. 
First, a point cloud in 16-dimensional latent space is generated by applying the embedding map $\phi$ to a set of images obtained from a regular grid on $[0,1]^2$. This point cloud is then projected onto the first three components determined by a PCA. The first image visualizes the quantity \eqref{eq:self-intersection} evaluated on this grid in the black frame. 
The image is extended in all directions according to the identification of the sides. The white lines were computed using the watershed algorithm. Each one of the lines (circles) is folded together, with the diamond markers showing the folding points. The folding becomes visible in the third image. Points are colored according to the original $\alpha$ coordinate, using a cyclic color map. 
The third row shows the embedding of a geodesic curve between $A$ and $C$, where $B$ is their geodesic midpoint, as well as the images obtained through linear interpolation in latent space for $\lambda=0$ and $\lambda=1$ using different starting points symmetric around $B$. 
\label{fig:arcs-summary}} 
\end{figure}

In all our experiments, the latent space dimensionality $l$ is chosen substantially larger than the minimum value required for a smooth embedding -- in real applications the intrinsic dimensionality $m$ is unknown beforehand anyway, so that one needs to pick $l$ rather larger than smaller.  Furthermore, the encoder might achieve flatter embeddings when more dimensions are available.
For instance, $S^1\times S^1\subset\R^4$ provides a better embedding of the flat torus than would be possible in $\R^3$.
We observe that the encoder indeed makes use of that freedom for dataset \ref{dataset:r}:
The bottom right graph in \cref{fig:cows-qualitative} shows the total amount of explained variance as a function of increasing subspace dimension.
Apparently the embedding uses six Euclidean dimensions,
even though $l=5$ would be enough for a (potentially non-isometric) embedding ($SO(3)\cong\R P^3$, which embeds into $\R^5$, but not $\R^4$ \cite{Ho40,Ha38}).
In \cref{fig:arcs-distances} we can similarly read off the used number of dimensions for dataset \ref{dataset:a}.
Here the situation is similar to, but not as obvious as before:
to the best of our knowledge, it is not known whether there exists a smooth isometric embedding of the Klein bottle into less than five dimensions.
However, isometric immersions (with self-intersections) exist into four dimensions (\cf~\cite{To41}).
Since our encoder loss functional cannot distinguish between immersions and embeddings (there is no term preventing self-intersections),
the minimum dimensionality would thus be no larger than four, but our latent manifold actually uses five Euclidean dimensions in the case $\lambda>0$.
Interestingly, without bending regularization, \ie~for $\lambda=0$, the embedding uses many more dimensions.
As discussed above, this is not because a minimization of the isometry term would make this necessary.
It is rather a manifestation of the strong degeneracy of the set of isometric embeddings, which are possible using arbitrarily many Euclidean dimensions.
\Cref{fig:arcs-summary} illustrates that the topology of the Klein bottle was reproduced for that dataset. It is well-known that no three-dimensional embedding without self-intersections exists. This explains the self-intersections in each of the visualized projections.
In \cref{fig:arcs-summary} we also analyze the nature of those self-intersections occurring after projection: Denote the composition of $\phi$ with the projection onto a 3-dimensional subspace $V$ by $\phisub$. Then such a self-intersection after projection is characterized by $\phisub(x)=\phisub(y)$ for $x \neq y$. Thus, a suitable indicator of a self-intersection at $x$ is
\begin{equation}\label{eq:self-intersection}
  \min_{y \in M, y \neq x}\frac{\abs{\phisub(x)-\phisub(y)}}{d_M(x,y)}.
\end{equation}
If this quantity is small, we expect a self-intersection at $x$. 
Recall that, when training the encoder separately, our method in principle does not prevent self-intersections. In \cref{fig:arcs-distances} we plot true distances on the Klein bottle against Euclidean distances of the corresponding embedded points in latent space. The resulting graph clearly indicates the absence of any self-intersections. While this might be due to the included decoder, which promotes injectivity of the embedding, we observed similar results when training the encoder separately. This plot further shows that, for $\lambda=0$, the distances are more or less preserved up to the $\eps$-value used in the training.Instead, for $\lambda=1$, the isometry of the embedding is traded off for less bending.


\subsection{Linear interpolation in latent space}\label{subsec:interpolation}
Let $x, y$ be two input images. We may obtain an interpolant between $x$ and $y$ in the following way.  Let $\tilde x$ and $\tilde y\in \R^l$ be the latent codes corresponding to $x, y$ and let $\tilde z$ be a linear interpolation of $\tilde x, \tilde y$. Decoding $\tilde z$ gives the desired interpolant $z$ between $x$ and $y$. In formulas, $z=\psi(\alpha \phi(x) + (1-\alpha) \phi(y))$ for an interpolation coefficient $\alpha \in [0, 1]$.
Flatness of the latent manifold is expected to improve 
the usefulness of interpolants obtained in this way across moderate distances, since one then expects the interpolant $\tilde z$ to be close to the latent manifold. This is qualitatively 
illustrated in \cref{fig:cows-qualitative} for dataset \ref{dataset:r}. 
For $\lambda=0$, the decoder output of linear interpolations in latent space does not at all reproduce continuously rotating objects, whereas it clearly does for $\lambda=10$.
Since our focus is on the regularizing properties of our encoder loss functional $E^{\sample}$,
the decoder was intentionally not trained on linear interpolation and furthermore not additionally regularized.
An additional training of the decoder on linear interpolants in latent space would likely improve the interpolation quality even more,
and perhaps even allow the decoder to compensate for the deficiencies visible in \cref{fig:cows-qualitative,fig:quantization} for $\lambda=0$.
Due to the extrinsic curvature of the latent manifold, linear interpolation of course only makes sense for sufficiently close endpoints, as illustrated in \cref{fig:quantization}.
To quantify the error of a linear interpolation in latent space as a function of interpolation distance, for a fixed test sample set $\sample[]\subset M\times M$
we let $\sample[\delta]=\{(x,y)\in \sample[]\,:\, d_M(x,y)\leq \delta\}$ and define
\begin{align}\label{eq:error-lat}%
  \err_{\text{lat}}^2(\delta) = \frac{1}{\abs{\sample[\delta]}}\sum_{x,y\in\sample[\delta]}\abs{\phi(\av_M(x,y))-\av_{\R^l}(\phi(x),\phi(y))}^2.
\end{align}
We further measure the $L^2$-error to the ground truth geodesic interpolation in image space using an error measure $\err_{\text{im}}$ defined as
\begin{align}\label{eq:error}
\err_{\text{im}}(\delta)^2 &= {\frac{1}{\abs{\sample[\delta]}}\sum_{(x,y)\in\sample[\delta]} \max\{\err_{\text{i}}(x,y)^2 - \err_{\text{b}}(x,y)^2, 0\}}\text{ for}\\
\err_{\text{i}}(x,y) &= \|\av_M(x,y) - \psi(\av_{\R^l}(\phi(x),\phi(y)))\|_{L^2},\\
\err_{\text{b}}(x,y) &= \|\av_M(x,y) - \psi(\phi(\av_M(x,y)))\|_{L^2}.
\end{align}
Here, $\err_{\text{i}}$ is the error due to linear interpolation, and $\err_{\text{b}}$ is the base reconstruction error, which occurs independently of interpolation. Note that it is expected (but not guaranteed) that $\err_{\text{i}} \geq \err_{\text{b}}$.
%
\begin{figure}
  \centering
  \begin{tikzpicture}

\pgfplotsset{
  short legend/.style={%
    legend image code/.code={
      \draw[##1,line width=0.6pt]
        plot coordinates {
          (0cm,0cm)
          (0.3cm,0cm)
          };%
        }
    }
}

\begin{axis}
[
name=plot,
title='Rotating Object ({\bf R})',
width=7cm, 
height=4cm,
axis background/.style={fill=color0},
axis line style={white},
legend style={short legend},
x grid style={white},
xmajorgrids,
xtick style={draw=none},
legend style={
  fill opacity=0.6,
  draw opacity=1,
  text opacity=1,
  draw=white!80!black,
  fill=color0,
  at={(0.98,0.5)},
  anchor=north east,
},
xlabel=$\delta$,
ylabel=$\err_{\text{im}}(\delta)$,
xtick style={color=white!15!black},
y grid style={white},
ymajorgrids,
scaled ticks=false,
ytick style={draw=none},
yticklabel style={/pgf/number format/.cd,fixed,precision=3, /tikz/.cd},
ytick style={color=white!15!black},
]
\scriptsize
\addplot [thick, cb-darkblue]
       table[x expr=\thisrow{eps}, y expr=sqrt(\thisrow{0}),col sep=comma] {images_cows_results_compared.csv}; 
\addlegendentry{$\lambda=0$};
\addplot [thick, cb-strongorange]
       table[x expr=\thisrow{eps}, y expr=sqrt(\thisrow{10}),col sep=comma] {images_cows_results_compared.csv}; 
\addlegendentry{$\lambda=10$};
\addplot [thick, cb-green]
       table[x expr=\thisrow{eps}, y expr=sqrt(\thisrow{auto}),col sep=comma] {images_cows_results_compared.csv}; 
\addlegendentry{no regul.};
\end{axis}

\begin{axis}
[
at=(plot.right of south east), anchor=left of south west,
xshift=0.5cm,
title='Arc rotating around two specified axes ({\bf A})',
width=7cm, 
height=4cm,
axis background/.style={fill=color0},
axis line style={white},
legend style={short legend},
x grid style={white},
xmajorgrids,
xtick style={draw=none},
legend style={
  fill opacity=0.6,
  draw opacity=1,
  text opacity=1,
  draw=white!80!black,
  fill=color0,
  at={(0.98,0.5)},
  anchor=north east,
},
xlabel=$\delta$,
ylabel=$\err_{\text{im}}(\delta)$,
xtick style={color=white!15!black},
y grid style={white},
ymajorgrids,
scaled ticks=false,
ytick style={draw=none},
yticklabel style={/pgf/number format/.cd,fixed,precision=3, /tikz/.cd},
ytick style={color=white!15!black},
]
\scriptsize
\addplot [thick, cb-darkblue]
       table[x=eps, y expr=sqrt(\thisrow{0}),col sep=comma] {images_klein_bottle_results_compared.csv}; 
\addlegendentry{$\lambda=0$};
\addplot [thick, cb-strongorange]
       table[x=eps, y expr=sqrt(\thisrow{1}),col sep=comma] {images_klein_bottle_results_compared.csv}; 
\addlegendentry{$\lambda=1$};
\addplot [thick, cb-green]
       table[x=eps, y expr=sqrt(\thisrow{auto}),col sep=comma] {images_klein_bottle_results_compared.csv}; 
\addlegendentry{no regul.};
\end{axis}
\end{tikzpicture}
  \caption{\label{fig:quantitative} Comparison of the interpolation error for different regularizations for datasets \ref{dataset:r} (left) and \ref{dataset:a} (right).The same training procedures as stated in \cref{fig:cows-qualitative} (for \ref{dataset:r}) and \cref{fig:arcs-distances} (for \ref{dataset:a}) were used.
  }f
\end{figure}
%
For datasets \ref{dataset:a} and \ref{dataset:r}, \cref{fig:quantitative} compares $\err_{\text{im}}(\delta)$ for different regularizations.
The error is highest without encoder regularization, and it is in particular already high even for small distances.
Our regularization reduces the error, particularly for small interpolation distances.
In our controlled environment it is possible to also test the interplay of sampling radius and bending parameter $\lambda$. 
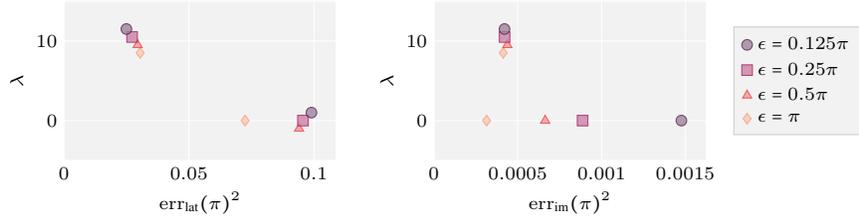
\begin{figure}
\centering
  \begin{tikzpicture}

\pgfplotsset{compat=1.3}

\definecolor{color1}{rgb}{0.29977678, 0.11356089, 0.29254823}
\definecolor{color2}{rgb}{0.63139686, 0.10067417, 0.35664819}
\definecolor{color3}{rgb}{0.90848638, 0.24568473, 0.24598324}
\definecolor{color4}{rgb}{0.96298491, 0.6126247, 0.45145074}

\begin{axis}
[
name=plot1,
width=5.2cm, 
height=3.7cm,
axis background/.style={fill=color0},
axis line style={white},
ylabel=$\lambda$,
xlabel=$\err_{\text{lat}}(\pi)^2$,
y grid style={white},
ymajorgrids,
ytick style={draw=none},
ytick = {0, 10},
ymin=-5,
ymax=15,
ytick style={color=white!15!black},
scaled ticks=false,
xticklabel style={/pgf/number format/.cd,fixed,precision=4, 
/tikz/.cd},
x grid style={white},
xmin=0,
xmajorgrids,
xtick style={draw=none},
xtick style={color=white!15!black},
legend pos=outer north east,
legend cell align=left,
legend style={
  fill opacity=1,
  draw opacity=1,
  text opacity=1,
  at={(1.1,0.5)},
  anchor=west,
  draw=white!80!black,
  fill=color0
},
scatter/classes={%
              0.125={mark=*, fill=color1!55!white, draw=color1},
              0.25={mark=square*,fill=color2!55!white, draw=color2},%
              0.5={mark=triangle*,fill=color3!55!white, draw=color3},%
              1={mark=diamond*, fill=color4!55!white, draw=color4}}
]
\scriptsize
\addplot+[scatter, only marks, opacity=0.8, mark options={scale=1}, scatter src=explicit symbolic] table[x expr=\thisrow{val}, y=lambda, col sep=comma, meta=eps] {images_sundials_mean_latent.csv};
\end{axis}
\begin{axis}
[
name=plot2,
at=(plot1.right of south east), anchor=left of south west,
width=5.2cm, 
height=3.7cm,
axis background/.style={fill=color0},
axis line style={white},
xshift=0.5cm,
ylabel=$\lambda$,
xlabel=$\err_{\text{im}}(\pi)^2$,
y grid style={white},
ymajorgrids,
ytick style={draw=none},
ytick = {0, 10},
ymin=-5,
ymax=15,
ytick style={color=white!15!black},
scaled ticks=false,
xticklabel style={/pgf/number format/.cd,fixed,precision=4, 
/tikz/.cd},
x grid style={white},
xmin=0,
xmajorgrids,
xtick style={draw=none},
xtick style={color=white!15!black},
legend pos=outer north east,
legend cell align=left,
legend style={
  fill opacity=1,
  draw opacity=1,
  text opacity=1,
  at={(1.1,0.5)},
  anchor=west,
  draw=white!80!black,
  fill=color0
},
scatter/classes={%
              0.125={mark=*, fill=color1!55!white, draw=color1},
              0.25={mark=square*,fill=color2!55!white, draw=color2},%
              0.5={mark=triangle*,fill=color3!55!white, draw=color3},%
              1={mark=diamond*, fill=color4!55!white, draw=color4}}
]
\scriptsize
\addplot+[scatter, only marks, opacity=0.8, mark options={scale=1}, scatter src=explicit symbolic] table[x expr=\thisrow{val}, y=lambda, col sep=comma, meta=eps] {images_sundials_mean.csv};
\legend{$\epsilon=0.125\pi$,$\epsilon=0.25\pi$,$\epsilon=0.5\pi$,$\epsilon=\pi$}
\end{axis}
\end{tikzpicture}
    \caption{\label{fig:sundials-compare-eps} Average interpolation errors in latent and image space for training with different sampling radii $\eps$ computed across a test set consisting of random pairs with arbitrary distance on the upper hemisphere. The encoder was trained separately from the decoder, and the training was stopped after the value of the functional $E^{\sample}[\phi]$ evaluated on a different test set did not decrease for 20 epochs. Pairs with distance below $\frac{1}{100}$ of the maximal distance were rejected. The decoder was trained until an accuracy of $10^{-5}$ was reached on the test set. This procedure was repeated three times with different random seeds. Note that the points were slightly shifted in y direction for better distinction.}
\end{figure} 
Recall that we use the same network architecture and penalty on the network weights for all $\eps$, since we only consider a fixed range of $\epsilon$ values anyway.
In \cref{fig:sundials-compare-eps}, we evaluated the interpolation {errors $\err_{\text{lat}}(\pi)$ and $\err_{\text{im}}(\pi)$ from \eqref{eq:error-lat} and \eqref{eq:error}} for dataset \ref{dataset:s} on a test set of arbitrarily chosen point pairs using sampling strategy \ref{S3}.
{For vanishing bending parameter $\lambda$ we observe a dependence of the interpolation error on the sampling radius $\eps$:
A larger value of $\eps$ is associated with a smaller interpolation error, indicating that a large sampling radius $\eps$ also has some regularizing effect.
A positive bending parameter $\lambda$ more or less overrides the influence of the sampling radius:
The interpolation error gets consistently small, independent of $\eps$.}
This confirms our hypothesis that the bending penalty produces latent manifolds with a smoother structure. This, in turn,  is expected to reduce the generalization error in postprocessing tasks (indeed, the encoder was only trained on point pairs with distance below $\eps$, but the interpolation {errors $\err_{\text{im}}$ and $\err_{\text{lat}}$ are} small for arbitrary point pairs).

\subsection{Noise in the embedding due to image quantization}\label{subsec:noise}
To illustrate the regularization properties of our loss functional, we did not add any artificial noise to our images, as noisy images would require additional tailored regularization. 
However, the process of generating images from low dimensional manifolds may introduce noise, for instance by discretization as pixel images in datasets \ref{dataset:a} and \ref{dataset:r}. 
We illustrate this effect on the dataset \ref{dataset:e} by comparing smooth and binary images. 
Indeed, while the set of smooth ellipse images is infinite, the set of binary ellipse images is finite due to quantization. 
Thus, while the underlying manifold is still the same, the corresponding set in image space is different. For instance, the same pair of binary ellipse images may be sampled with different distances, leading to a type of noise.
\Cref{fig:quantization} shows that the cylindrical structure of the resulting latent manifold $\phi(M)$ is not as clean in this case. Increasing the resolution of the images would reduce the disparity between the underlying manifold and the image manifold, leading to a reduction of the observed effect.

\begin{figure}
\input{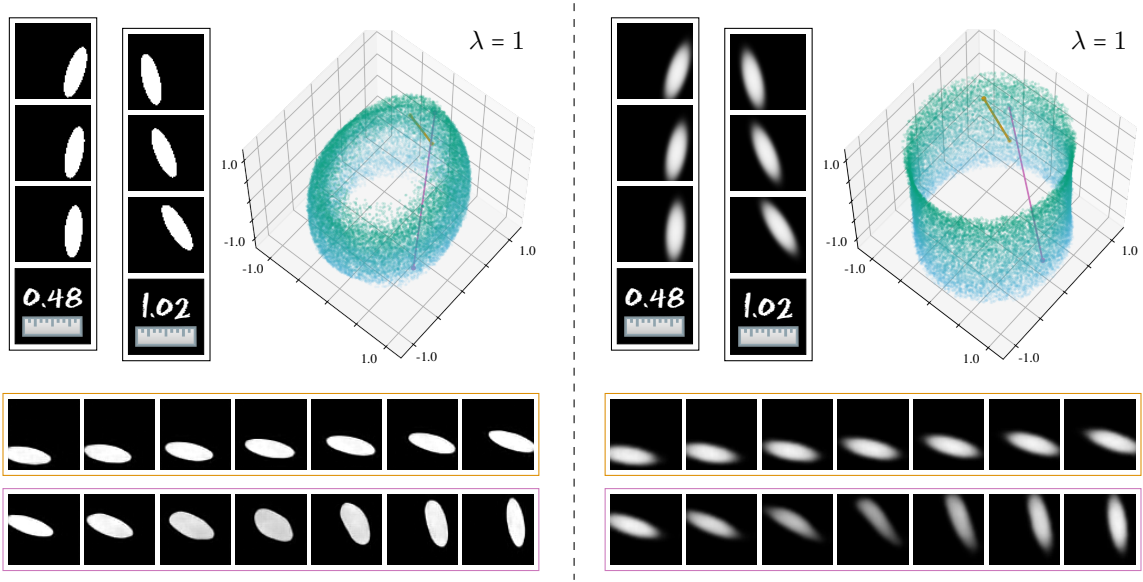}
\caption{Visualization of the results for dataset \ref{dataset:e} for $\lambda=1$ and $\eps\approx 1.06$ (with maximal distance $\approx 4.32$) for different ways of rendering ellipses: as an indicator function (left) and as smoothed indicator function with $k=3$ (right). In each case, two input triples and their distances are shown. PCA coordinates $1,\, 2,\,3$  are used to visualize the latent manifold.  On the bottom, example interpolations are visualized. Their corresponding interpolation path in latent space is shown in the PCA plot in the color of the frame.
Encoder and decoder were trained separately. Training of the encoder was stopped after the value of the training loss did not decrease for 20 epochs. Pairs with distance below $\frac{1}{100}$ of the maximal distance were rejected. The decoder was trained until the reconstruction error $R$ evaluated on a test set reached a threshold of $3\cdot 10^{-3}$ (binary images) and $10^{-4}$ (smoothed images).
\label{fig:quantization}}
\end{figure}

\section{Some open questions}
We considered a two-step limit process for the discrete sampling loss functional $E^{\sample}$, where we first let $\vert \sample\vert \to \infty$ and then $\epsilon \to 0$.
It seems plausible that one could also perform both limits simultaneously.
Furthermore, we restricted the space of admissible functions, or rather admissible deep neural networks, according to $\eps$.
As discussed in \cref{subsec:discrete_existence}, this restriction might, at least in certain settings, not be necessary.
The limit energy $\mathcal{E}$ depends on second order derivatives of the encoder mapping and shows some resemblance to elastic energies of thin shells.
By classical spectral arguments for discrete elliptic operators as in \cite{Thomee2006}, explicit gradient descent methods for the minimization of such elastic energies come with severe time step restrictions of the order $h^4$, where $h$ is the spatial discretization width.
It is unclear if the structure of our regularization loss implies similar restrictions on the learning rate (the time step of the stochastic gradient descent) used when training the encoder. 
A simultaneous training of encoder and decoder via a minimization of the (weighted) sum of encoder regularization loss and reconstruction loss
may have additional regularizing effects, since some embeddings may be more favorable for the decoder than others.
Such effects have yet to be understood, as well as whether and how the decoder should eventually be regularized. This of course depends on the application in mind.
For larger values of $\lambda$, the loss functional ensures that linear interpolation in latent space becomes a better approximation to Riemannian interpolation on the latent manifold.
Correspondingly, we have seen a stronger flattening of the embedding in \cref{fig:sundials-lambda}.
One might expect that this automatically improves the output of the decoder applied to linear interpolations in latent space, as depicted 
in \cref{fig:cows-qualitative}. However, in the example of \cref{fig:arcs-summary}, higher values of $\lambda$ do not come with an improvement in the interpolation quality. This is also reflected quantitatively in \cref{fig:quantitative}. In fact, for more complicated latent manifolds, various factors might additionally impact the interpolation error, such as the sign of the intrinsic curvature, an increased tangential distortion of the encoder map for flatter latent manifolds, or the interplay of the differently trained encoder and decoder map.\\[0.5em]

\noindent\textbf{Data availability statement}\\[0.5em]
The code used in this paper is available online in a Gitlab repository \href{https://gitlab.com/jubrau/LBD}{gitlab.com/jubrau/LBD} and in the datastore of the University of Münster \href{https://doi.org/10.17879/48978454901}{doi:10.17879/48978454901} \cite{Br23}.\\[0.5em]

\noindent\textbf{Acknowledgements}\\[0.5em] This work was supported by the Deutsche Forschungsgemeinschaft (DFG, German Research Foundation) 
via project 211504053 - Collaborative Research Center 1060 and via Germany's Excellence Strategy project 390685813 - 
Hausdorff Center for Mathematics and  project 390685587 - Mathematics M\"unster: Dynamics-Geometry-Structure.
\bibliographystyle{acm}
\bibliography{references}

\begin{thebibliography}{10}

\bibitem{Absil09}
{\sc Absil, P.-A., Mahony, R., and Sepulchre, R.}
\newblock {\em Optimization algorithms on matrix manifolds}.
\newblock Princeton University Press, 2009.

\bibitem{AdFo03}
{\sc Adams, R., and Fournier, J.}
\newblock {\em Sobolev Spaces}.
\newblock ISSN. Elsevier Science, June 2003.

\bibitem{AtGrLi20}
{\sc Atzmon, M., Gropp, A., and Lipman, Y.}
\newblock Isometric autoencoders.
\newblock Preprint \href{https://arxiv.org/abs/2006.09289}{arXiv:2006.09289},
  2020.

\bibitem{AuKo09}
{\sc Aubert, G., and Kornprobst, P.}
\newblock Can the nonlocal characterization of {S}obolev spaces by {B}ourgain
  et al. be useful for solving variational problems?
\newblock {\em SIAM J. Numer. Anal. 47}, 2 (2009), 844--860.

\bibitem{aubin_espaces_1976}
{\sc Aubin, T.}
\newblock Espaces de {S}obolev sur les variétés riemanniennes.
\newblock {\em B. Sci. Math. 100\/} (1976), 149--173.

\bibitem{BeNi01}
{\sc Belkin, M., and Niyogi, P.}
\newblock Laplacian eigenmaps and spectral techniques for embedding and
  clustering.
\newblock {\em Adv. Neur. In. 14\/} (2001).

\bibitem{BeRaRoGo18}
{\sc Berthelot, D., Raffel, C., Roy, A., and Goodfellow, I.}
\newblock Understanding and improving interpolation in autoencoders via an
  adversarial regularizer.
\newblock Preprint \href{https://arxiv.org/abs/1807.07543}{arXiv:1807.07543},
  2018.

\bibitem{BoGrKuPe19}
{\sc Bolcskei, H., Grohs, P., Kutyniok, G., and Petersen, P.}
\newblock Optimal approximation with sparsely connected deep neural networks.
\newblock {\em SIAM J. Math. Data Sci. 1}, 1 (2019), 8--45.

\bibitem{Bo07}
{\sc Borghol, R.}
\newblock Some properties of {S}obolev spaces.
\newblock {\em Asymptot. Anal. 51}, 3-4 (2007), 303--318.

\bibitem{Bo22}
{\sc Boumal, N.}
\newblock {\em An introduction to optimization on smooth manifolds}.
\newblock Cambridge University Press, mar 2023.

\bibitem{BoBrMi01}
{\sc Bourgain, J., Brezis, H., and Mironescu, P.}
\newblock Another look at {S}obolev spaces.
\newblock {\em Optimal Control and Partial Differential Equations A Volume in
  Honour of {A}. {B}enssoussan’s 60th Birthday\/} (2001), 439--455.

\bibitem{Br02}
{\sc Braides, A.}
\newblock {\em {$\Gamma$}-convergence for beginners}, vol.~22 of {\em Oxford
  Lecture Series in Mathematics and its Applications}.
\newblock Oxford University Press, Oxford, 2002.

\bibitem{Br23}
{\sc Braunsmann, J.}
\newblock Python code for ``{C}onvergent autoencoder approximation of low
  bending and low distortion manifold embeddings''.
\newblock \url{https://doi.org/10.17879/48978454901}, 2023.

\bibitem{BrRaRu21}
{\sc Braunsmann, J., Rajkovi\'c, M., Rumpf, M., and Wirth, B.}
\newblock Learning low bending and low distortion manifold embeddings.
\newblock In {\em Proc. of the IEEE/CVF Conference on Computer Vision and
  Pattern Recognition (CVPR) Workshops\/} (2021), pp.~4416--4424.

\bibitem{ChKlKu+18}
{\sc Chen, N., Klushyn, A., Kurle, R., Jiang, X., Bayer, J., and Smagt, P.}
\newblock Metrics for deep generative models.
\newblock In {\em Proc. of International Conference on Artificial Intelligence
  and Statistics\/} (2018), PMLR, pp.~1540--1550.

\bibitem{DoGr05}
{\sc Donoho, D.~L., and Grimes, C.}
\newblock Image manifolds which are isometric to {E}uclidean space.
\newblock {\em J. Math. Imaging Vis. 23}, 1 (2005), 5--24.

\bibitem{FePr01}
{\sc Feng, X., and Prohl, A.}
\newblock Analysis of a fully discrete finite element method for the phase
  field and approximation of its sharp interface limits.
\newblock {\em Math. Comput. 73}, 246 (2004), 541--567.

\bibitem{GuKuPe20}
{\sc G{\"u}hring, I., Kutyniok, G., and Petersen, P.}
\newblock Error bounds for approximations with deep {ReLU} neural networks in
  ${W}^{s,p}$ norms.
\newblock {\em Anal. Appl. 18}, 05 (2020), 803--859.

\bibitem{Ha38}
{\sc Hantzsche, W.}
\newblock Einlagerung von {M}annigfaltigkeiten in euklidische {R}\"{a}ume.
\newblock {\em Math. Z. 43}, 1 (1938), 38--58.

\bibitem{HeZhReSu15}
{\sc He, K., Zhang, X., Ren, S., and Sun, J.}
\newblock Delving deep into rectifiers: Surpassing human-level performance on
  imagenet classification.
\newblock In {\em Proc. of IEEE International Conference on Computer Vision
  (ICCV)\/} (December 2015).

\bibitem{Hebey96}
{\sc Hebey, E.}
\newblock {\em Sobolev spaces on {R}iemannian manifolds}.
\newblock Springer Berlin, Heidelberg, 1996.

\bibitem{Ho40}
{\sc Hopf, H.}
\newblock Systeme symmetrischer {B}ilinearformen und euklidische {M}odelle der
  projektiven {R}\"{a}ume.
\newblock {\em Vierteljschr. Naturforsch. Ges. Z\"{u}rich 85}, Beiblatt
  (Festschrift Rudolf Fueter) (1940), 165--177.

\bibitem{HoStWh90}
{\sc Hornik, K., Stinchcombe, M., and White, H.}
\newblock Universal approximation of an unknown mapping and its derivatives
  using multilayer feedforward networks.
\newblock {\em Neural networks 3}, 5 (1990), 551--560.

\bibitem{Hu09}
{\sc Huynh, D.~Q.}
\newblock Metrics for 3d rotations: Comparison and analysis.
\newblock {\em J. Math. Imaging Vis. 35}, 2 (Oct. 2009), 155–164.

\bibitem{KaZhSaNa20}
{\sc Kato, K., Zhou, J., Sasaki, T., and Nakagawa, A.}
\newblock Rate-distortion optimization guided autoencoder for isometric
  embedding in {E}uclidean latent space.
\newblock In {\em Proc. of the International Conference on Machine Learning\/}
  (2020), PMLR, pp.~5166--5176.

\bibitem{KiBa15}
{\sc Kingma, D.~P., and Ba, J.}
\newblock Adam: {A} method for stochastic optimization.
\newblock In {\em Proc. of 3rd International Conference on Learning
  Representations (ICLR)\/} (2015), Y.~Bengio and Y.~LeCun, Eds.

\bibitem{KiWe13}
{\sc Kingma, D.~P., and Welling, M.}
\newblock Auto-encoding variational {B}ayes.
\newblock Preprint \href{https://arxiv.org/abs/1312.6114}{arXiv:1312.6114},
  2013.

\bibitem{KoClMi21}
{\sc Kohli, D., Cloninger, A., and Mishne, G.}
\newblock Ldle: low distortion local eigenmaps.
\newblock {\em J. Mach. Learn. Res. 22}, 282 (2021), 1--64.

\bibitem{KrMo18}
{\sc Kreuml, A., and Mordhorst, O.}
\newblock Fractional sobolev norms and bv functions on manifolds.
\newblock Preprint \href{https://arxiv.org/abs/1805.04425}{arXiv:1805.04425},
  2018.

\bibitem{Ku55b}
{\sc Kuiper, N.~H.}
\newblock On ${C}^1$ isometric embeddings. {II}.
\newblock In {\em Nederl. Akad. Wetensch. Proc. Ser. A\/} (1955), vol.~58,
  pp.~683--689.

\bibitem{Ku55a}
{\sc Kuiper, N.~H.}
\newblock On ${C}^1$-isometric imbeddings. {I}.
\newblock In {\em Indagationes Mathematicae (Proceedings)\/} (1955), vol.~58,
  Elsevier, pp.~545--556.

\bibitem{LePaScSp15}
{\sc Lellmann, J., Papafitsoros, K., Sch\"onlieb, C., and Spector, D.}
\newblock Analysis and application of a nonlocal {H}essian.
\newblock {\em SIAM J. Imag. Sci. 8}, 4 (2015), 2161--2202.

\bibitem{Lo77}
{\sc Lo{\`{e}}ve, M.}
\newblock {\em Probability theory {I}}.
\newblock Springer New York, NY, 1977.

\bibitem{MiTa99}
{\sc Mitrea, M., and Taylor, M.}
\newblock Boundary layer methods for {L}ipschitz domains in {R}iemannian
  manifolds.
\newblock {\em J. Funct. Anal. 163}, 2 (Apr. 1999), 181--251.

\bibitem{Mo69a}
{\sc Mosco, U.}
\newblock Convergence of convex sets and of solutions of variational
  inequalities.
\newblock {\em Adv. Math. 3\/} (1969), 510--585.

\bibitem{Na54}
{\sc Nash, J.}
\newblock ${C}^1$ isometric imbeddings.
\newblock {\em Ann. Math.\/} (1954), 383--396.

\bibitem{OrYaHe20}
{\sc Oring, A., Yakhini, Z., and Hel-Or, Y.}
\newblock Autoencoder image interpolation by shaping the latent space.
\newblock Preprint \href{https://arxiv.org/abs/2008.01487}{arXiv:2008.01487},
  2020.

\bibitem{PaTaBrKi19}
{\sc Pai, G., Talmon, R., Bronstein, A., and Kimmel, R.}
\newblock Dimal: {D}eep isometric manifold learning using sparse geodesic
  sampling.
\newblock In {\em Proc. of the 2019 IEEE Winter Conference on Applications of
  Computer Vision (WACV)\/} (2019), IEEE, pp.~819--828.

\bibitem{PeLiDi+20}
{\sc Peterfreund, E., Lindenbaum, O., Dietrich, F., Bertalan, T., Gavish, M.,
  Kevrekidis, I.~G., and Coifman, R.~R.}
\newblock Local conformal autoencoder for standardized data coordinates.
\newblock {\em Proc. of the National Academy of Sciences 117}, 49 (2020),
  30918--30927.

\bibitem{PeRaVo21}
{\sc Petersen, P., Raslan, M., and Voigtlaender, F.}
\newblock Topological properties of the set of functions generated by neural
  networks of fixed size.
\newblock {\em Found. Comput. Math. 21}, 2 (2021), 375--444.

\bibitem{CuPe19}
{\sc Peyr{\'e}, G., Cuturi, M., et~al.}
\newblock Computational optimal transport with applications to data science.
\newblock {\em Foundations and Trends{\textregistered} in Machine Learning 11},
  5-6 (2019), 355--607.

\bibitem{RaPoChLe07}
{\sc Ranzato, M., Poultney, C., Chopra, S., LeCun, Y., et~al.}
\newblock Efficient learning of sparse representations with an energy-based
  model.
\newblock {\em Adv. Neur. In. 19\/} (2007), 1137.

\bibitem{ravi2020pytorch3d}
{\sc Ravi, N., Reizenstein, J., Novotny, D., Gordon, T., Lo, W.-Y., Johnson,
  J., and Gkioxari, G.}
\newblock Accelerating 3d deep learning with {P}y{T}orch3d.
\newblock Preprint \href{https://arxiv.org/abs/2007.08501}{arXiv:2007.08501},
  2020.

\bibitem{RiViMuGlBe11}
{\sc Rifai, S., Vincent, P., Muller, X., Glorot, X., and Bengio, Y.}
\newblock Contractive auto-encoders: {E}xplicit invariance during feature
  extraction.
\newblock In {\em Proc. of the 28th International Conference on International
  Conference on Machine Learning\/} (2011).

\bibitem{Sa96}
{\sc Sakai, T.}
\newblock {\em Riemannian Geometry}.
\newblock Translations of mathematical monographs. American Mathematical
  Society, Providence, RI, may 1996.

\bibitem{SchTa19}
{\sc Schwartz, A., and Talmon, R.}
\newblock Intrinsic isometric manifold learning with application to
  localization.
\newblock {\em SIAM J. Imag. Sci. 12}, 3 (2019), 1347--1391.

\bibitem{ShKuFl18}
{\sc Shao, H., Kumar, A., and Thomas~Fletcher, P.}
\newblock The {R}iemannian geometry of deep generative models.
\newblock In {\em Proc. of the IEEE Conference on Computer Vision and Pattern
  Recognition Workshops\/} (2018), pp.~315--323.

\bibitem{Thomee2006}
{\sc Thom{\'{e}}e, V.}
\newblock {\em {G}alerkin Finite Element Methods for Parabolic Problems},
  2nd~ed., vol.~25 of {\em Springer Series in Computational Mathematics}.
\newblock Springer Berlin, Heidelberg, Berlin, 2006.

\bibitem{To41}
{\sc Tompkins, C.}
\newblock A flat {K}lein bottle isometrically embedded in {E}uclidean 4-space.
\newblock {\em B. Am. Math. Soc. 47}, 6 (1941), 508.

\bibitem{TrYo05a}
{\sc Trouv\'{e}, A., and Younes, L.}
\newblock Local geometry of deformable templates.
\newblock {\em SIAM J. Math. Anal. 37}, 1 (2005), 17--59.

\bibitem{ViLaBeMa08}
{\sc Vincent, P., Larochelle, H., Bengio, Y., and Manzagol, P.-A.}
\newblock Extracting and composing robust features with denoising autoencoders.
\newblock In {\em Proc. of the 25th international conference on Machine
  learning\/} (2008), pp.~1096--1103.

\bibitem{Wh92}
{\sc Whitney, H.}
\newblock Collected papers, {V}olume {II}, ({J}ames {E}ells, {D}omingo
  {T}oledo, eds.), 1992.

\bibitem{YoYoSoPa21}
{\sc Yonghyeon, L., Yoon, S., Son, M., and Park, F.~C.}
\newblock Regularized autoencoders for isometric representation learning.
\newblock In {\em Proc. of International Conference on Learning
  Representations\/} (2021).

\bibitem{Yo10}
{\sc Younes, L.}
\newblock {\em Shapes and diffeomorphisms}, vol.~171 of {\em Applied
  Mathematical Sciences}.
\newblock Springer Berlin, Heidelberg, 2010.

\bibitem{ZhLiCh+21}
{\sc Zhang, C., Liu, J., Chen, W., Shi, J., Yao, M., Yan, X., Xu, N., and Chen,
  D.}
\newblock Unsupervised anomaly detection based on deep autoencoding and
  clustering.
\newblock {\em Secur. Commun. Netw. 2021\/} (2021).

\end{thebibliography}
\end{document}